\theoremstyle{plain}
\newtheorem{thm}{Theorem}[section]
\newtheorem{prop}[thm]{Proposition}
\theoremstyle{remark}
\tikzstyle{vertex}=[circle, draw, inner sep=0pt, minimum size=6pt]
\begin{document}

\title{
Birationality and Landau-Ginzburg models
}
\author{Patrick Clarke}
\date{\today}

\maketitle

\begin{abstract}
We introduce a new technique for approaching birationality questions that arise in the mirror symmetry of complete intersections in toric varieties.  As an application we answer affirmatively and conclusively the question of Batyrev-Nill \cite{Batyrev-Nill} about the birationality of Calabi-Yau families associated to multiple mirror nef-partitions.  This completes the progress in this direction made by  Li's breakthrough  \cite{li-2016}.  In the process, we obtain results in the theory of Borisov's nef-partitions \cite{Borisov} and provide new insight into the geometric content of the multiple mirror phenomenon.
\end{abstract}

\section{Introduction.}

This paper provides a birationality result (Theorem \ref{theorem:omega-iso}) for a certain situation that arises in Batyrev-Nill's \cite{Batyrev-Nill} multiple mirror nef-partitions.  
We are able to successfully apply this theorem to prove the birationality of the multiple mirror  Calabi-Yau families (Theorem \ref{theorem:birat-nef}).
This completes the progress in this direction made by  Li's breakthrough  \cite{li-2016} by proving the result in full generality and without assumptions.  
Our methods 
 add a new perspective on how to connect data from one geometric phase to another in a toric Landau-Ginzburg model.
In addition, a  ``coarsening'' technique (Proposition \ref{prop:coarsening}) and directed graphs extracted from multiple mirror nef-partitions (Subsection \ref{subsection:d-graph}) contribute to the developing literature (e.g. Batyrev-Borisov \cite{Batyrev-Borisov-Cones},
Kreuzer-Reigler-Sahakyan \cite{kreuzer-reigler-sahakyan},
Nill-Schepers \cite{nill-schepers}) on nef-partitions themselves.

The title of this paper addresses the fact that the situation described by our theorem arises more generally when a variation of the K\"ahler parameter  induces a geometric transition from one Landau-Ginzburg model corresponding to a complete intersection to another.  
In order to not  go too far afield, we refer the interested reader to Clarke \cite{clarke-2008} for this more general setting
and Witten \cite{witten-phases} for an early physical treatment.  
In the last section of this paper, we make some remarks about questions raised by our theorem in the conjecturally vacuous theory of multiple mirror Fano varieties.
 
The multiple mirror nef-partitions of  Batyrev-Nill \cite{Batyrev-Nill} appeared as a phenomenon that arises in Borisov's  \cite{Borisov} formulation of mirror symmetry for Calabi-Yau complete intersections in toric varieties.  
 In such a situation, a single family  of Calabi-Yau complete intersections is mirror to two different families of Calabi-Yaus. 
 The work of  Batyrev-Borisov \cite{batyrev-borisov-on-cy} on the Hodge theoretical mirror symmetry of 
 Dixon \cite{dixon-1987}, Lerche-Vafa-Warner \cite{lerche-vafa-warner-1989} and Greene-Plesser \cite{Greene_Plesser_1990}
 guarantees that members of these two families have the same stringy Hodge numbers, and Batyrev's work on birational Calabi-Yaus \cite{batyrev-birational} suggests that these multiple mirror families might be birational.  
 This question of birationality and a related one about derived categories were asked by Batyrev-Nill \cite{Batyrev-Nill} in the same paper that first observed the phenomenon.

Similar examples of this kind of birationality appear in the theory of Berglund-H\"ubsch-Krawitz mirror symmetry \cite{Berglund-Hubsch, krawitz:2010}.  
Birationality in these cases was shown first by Shoemaker \cite{shoemaker-2014}, and then developed further by Kelly \cite{kelly-2013} and Clarke \cite{clarke-2013}.  
The general phenomenon seems to be that the superpotential of the Landau-Ginzburg model governs the birationality class of the underlying sigma model, and if multiple mirror examples are found in any of the toric mirror constructions (e.g. Greene-Plesser \cite{Greene_Plesser_1990},
Berglund-H\"ubsch \cite{Berglund-Hubsch},
Batyrev \cite{Batyrev},
Borisov \cite{Borisov},
Givental \cite{Givental-1998},
Hori-Vafa \cite{Hori-Vafa}, and
Clarke \cite{clarke-dual-fans})
our technique can be applied.

Related results have been found comparing the derived categories of these different geometries.  
This is suggested by Kontsevich's homological mirror symmetry conjecture  \cite{kontsevich-hms}, and became accessible to modern techniques with the developments of Hori-Herbst-Page \cite{hori-herbst-page} and Ballard-Favero-Katzarkov \cite{ballard-favero-katzarkov} on derived categories and variation of GIT quotients.  
Batyrev-Nill \cite{Batyrev-Nill} also suggested that the derived categories of these multiple mirrors might be equivalent.  This was answered in the affirmative by the papers of Kelly-Favero \cite{Favero-Kelly:BN, Favero-Kelly:BHK}.  In fact, Doran-Favero-Kelly \cite{doran-favero-kelly} seems to have completely settled these sort of questions about derived categories for multiple mirrors that arise from variation of the K\"ahler parameter.

Our methods are mostly elementary.  We use basic toric geometry in a style that is familiar in the theory of toric Landau-Ginzburg models.  The matrix $(W)$ introduced in Subsection \ref{subsection:expansions} is inspired by, but different than one used by Li \cite{li-2016}.  Our directed graph $D$ in Subsection \ref{subsection:d-graph} is new, but our treatment requires little knowledge graph theory; indeed, we appeal only to the Perron-Frobenius theorem.  
The one area where some familiarity beyond what is contained in this paper might be helpful is the theory of nef-partitions; we feel that it is hard to operate confidently with these objects without having spent some time manipulating and experimenting with them.

{\bf Acknowledgements.}  
We would like to thank Ron Donagi, Zhan Li, and Jonah Blasiak for their contribution to various parts of this work.  
First and foremost, conversations and correspondence with Ron Donagi  have been invaluable.
In addition, we would like to thank him for teaching us the lucid argument for the irreducibility of the universal complete intersection that appears in the proof of Theorem \ref{theorem:birat-nef}.  
Zhan Li has offered feedback at all stages of the project.
His input has motivated us to considerably improve and strengthen many parts of this paper.
Finally, it was at Jonah Blasiak's suggestion that we used the Perron-Frobenius theorem in the proof of Proposition \ref{proposition:omega-non-empty}.  Before this, the proof was considerably more ad hoc.

%%%%%%%%%%%%%%%%%%%%%%%%%%%%%%%%%%%%%%%%%%%%%%%%%%%%%%%%%%%%%%%%%%%%%%%%%%%%%%%%%%%%%%%%%%%%%%%%%%%%%%%%%%%%%%%%%%%%%%%%%%%%%%%%%%%%%%%%%%%%%%%%%%%%%%%%%%%%%%%%%%%%%%%%%%%%%%%%%%%%%%%%%%%%%%%%%%%%%%%%%%%%%%%%%%%%%%%%%%%%

\section{Toric varieties in brief.}

Here we summarize aspects of toric geometry that we will later take for granted.
These include the expansion of a  function in characters, the toric structure on completely split vector bundles, notational conventions for characters, and the roll of polytopes in the theory of projective toric varieties.  These facts are standard and can be found in Cox-Little-Schenck \cite{cox-little-schenck}, as well as Oda \cite{oda-toric} and Fulton \cite{fulton-toric}.

{\bf Toric varieties and character expansions.} 
A toric variety $Y$ is a normal algebraic variety containing an algebraic torus $T \cong \mathbf{G}_m^{\dim Y}$ as an open dense subset, such that the action of the torus on itself extends to the whole variety.  
The regular functions on $T$ have characters $\chi \colon T \to \mathbf{G}_m$ as a basis.  
Thus any rational  function $f \colon Y \dashrightarrow \mathbf{A}^1$ can be expanded uniquely in characters.

{\bf Line bundles and total spaces.}   
The total space of line bundle $\mathcal{L}$ over $Y$ can always be given the structure of a toric variety.  
Recall that the total space $X$ of $\mathcal{L}$ is the relative spectrum of the sheaf of algebras $\operatorname{Sym}^\bullet \mathcal{L}^\vee$, where $\mathcal{L}^\vee$ is the dual bundle $\text{\it Hom}_{Y}(\mathcal{L}, \mathcal{O}_Y).$ 
The toric structure comes from choosing a non-zero rational section $e$ of $\mathcal{L}$ whose divisor is invariant under the action of $T$.  
Then the torus $\mathbf{G}_m \times T$ includes into $X$ by sending 
$(\lambda, t)$ to  $\lambda \cdot e(t)$. 
Iterating this procedure, the total space of any completely split bundle over a toric variety is toric.

{\bf Character notations.} 
Traditionally, the set of characters  on the torus $T$ is denoted $M$ and it is a finite rank abelian group whose operation is denoted $+$.  
When considering $m \in M$ as a rational function on $Y$ we use the notation $\chi^m$.  
This way sums in $M$ become products of functions in $\mathbf{C}(Y)$,
$$
\chi^{m+m'} = \chi^m \chi^{m'},
$$
 and there is no confusion between the sum of functions $\chi^m + \chi^{m'}$ and the ``sum'' of characters $\chi^{m+m'}.$

{\bf Polytopes and nef divisors.} 
On a projective toric variety a Cartier divisor is nef if and only if it is base point free.  In addition, any divisor is linearly equivalent to a torus invariant divisor.

The projective toric varieties which compactify a torus $T$ and the  monoid of torus invariant nef divisors on them can be completely encoded into convex, compact, integral polytopes in $M_\mathbf{R}$.
A torus invariant nef divisor $D$ on a projective toric variety $Y$ determines and is determined by a convex, compact, integral, polytope $P_D \subseteq M_\mathbf{R}$. 
The integral points of $P_D$ are those characters $m$ for which $(\chi^m) + D$ is effective, and therefore give a basis for the global sections of $\mathcal{O}(D)$.
This construction behaves well with sums in that $P_{D+D'}$ equals the Minkowski sum $P_D + P_{D'}$.
Thus the ring of global functions on the  total space of $\mathcal{O}(-D)$ is 
$\mathbf{C}[\sigma_{P_D} \cap (M \oplus \mathbf{Z})]$
where 
$$\sigma_{P_D} = \{  (rP_D, r) \in  M_\mathbf{R} \oplus \mathbf{R} \ | \ 0 \leq r \in \mathbf{R} \}.$$
Finally, note that torus invariant nef divisors $D$ and $D'$ are linearly equivalent if and only if $P_{D'} = P_D + m$ for some $m \in M$.

Reversing this point of view, a full dimensional, convex, compact, integral, polytope $P \subseteq M_{\mathbf{R}}$ determines a toric variety as 
$Y_P = \operatorname{Proj}  \mathbf{C}[\sigma_{P} \cap (M \oplus \mathbf{Z})]$.  
Finally, a polytope is the polytope of a nef divisor on $Y_P$ if and only if it can be written as the intersection of integral translates of the supporting half-spaces containing the facets of $P$.

\section{Geometries, expansions, assumptions and geography.}
\label{section:geometries-expansions-and-assumptions}

This section sets the stage for our general birationality result in  Section \ref{section:birationality}.
This is a  statement about complete intersections in toric varieties.  
So for $i=1,2$,
the input data includes
\begin{itemize}
\item a base toric variety $Y^{(i)}$,
\item a completely split bundle $\mathcal{V}^{(i)}$,
and
\item a global section $g^{(i)}$ in $\Gamma(Y^{(i)}, \mathcal{V}^{(i)})$.
\end{itemize}
The complete intersection $Z^{(i)} \subseteq Y^{(i)}$ is then the vanishing of the section $g^{(i)}$.

This data is not arbitrary.  In fact we impose the strong constraint that 
the sections $g^{(1)}$ and $g^{(2)}$ are ``the same.'' 
This is spelled out precisely in 
Subsection \ref{subsection:geometries}.
In short,  $g^{(i)}$ defines a function on the total space $X^{(i)}$ of the dual bundle ${\mathcal{V}^{(i)}}^\vee$. In our situation,
 $X^{(1)}$ and $X^{(2)}$ are birational, and the functions defined by $g^{(1)}$ and $g^{(2)}$ are the same 
on the open set identified via the birational map.

The common function defined by the bundle sections is written $W$.  
The birational map between $X^{(1)}$ and $X^{(2)}$ identifies their tori.  
Once identified, we refer to this simply as $T$. 
On $T$, the function $W$ can be expanded in characters.  
In addition, each complete intersection assigns a different grouping of the terms of this expansion.  
Subsection \ref{subsection:expansions} provides the details, introduces a matrix $(W)$ and a directed graph $D$ that organize the term groupings.  
Ultimately, an analysis of the graph $D$  allows us to apply our result to multiple mirror nef-partitions.  
Subsection \ref{subsection:assumptions} details two assumptions about the characters in the expansions.  
These are mild, and are always satisfied for multiple mirror nef-partitions after specializing to members in the mirror families.

Identification of the total spaces along the torus $T$ allows us to talk about points $t \in T$ that lie over one or both complete intersections.  
Within $T$ and above the complete intersections is a special subset we denote $R^{(1)} \cap R^{(2)}$. 
The amazing fact behind the proof of the birationality result is that there is a torus  $\mathbf{G}_m^\beta$ which acts on $R^{(1)} \cap R^{(2)}$
such that 
$$
R^{(1)} \cap R^{(2)} \ \cong \ \Omega^{(1)} \times  \mathbf{G}_m^\beta \ \cong \ \Omega^{(2)} \times \mathbf{G}_m^\beta
$$
as 
$\mathbf{G}_m^\beta$-spaces, for open subsets $\Omega^{(i)} \subseteq Z^{(i)}$.
Subsection \ref{subsection:geography} defines the $R$ and $\Omega$-sets, as well as a few other sets that help us in our arguments.

\subsection{Geometries.}
\label{subsection:geometries}
This subsection states precisely the geometry of the situation in which we can apply our theorem, and describes the identification of the total spaces along an open dense torus $T$.

\label{subsection:geometries}
{\bf Initial data.}
We begin with two toric varieties $Y^{(i)}$ for $i=1,2$.  
Over each of these we have  completely split rank $r$ bundles  
$$
\mathcal{V}^{(i)} = \mathcal{L}^{(i)}_1 \oplus \dotsm \oplus \mathcal{L}^{(i)}_r
$$ 
The line bundles $\mathcal{L}$ are assumed to be nef.
In addition, each bundle is equipped with a section $g^{(i)} \in \Gamma(Y^{(i)},\mathcal{V}^{(i)})$.
The complete intersections $Z^{(i)}$ that will be compared are given as the vanishing loci $Z(g^{(i)})$ of the sections $g^{(i)}.$

{\bf Toric total spaces and superpotential morphisms.}
From the initial data we form the total space of ${\mathcal{V}^{(i)}}^\vee$:
$$
X^{(i)} = \operatorname{Spec}_{Y^{(i)}} \operatorname{Sym}^\bullet \mathcal{V}^{(i)}.
$$
To this we add the morphism 
$$W^{(i)} \colon X^{(i)} \to \mathbf{A}^1$$
 defined by considering 
$g^{(i)}$ as a function.

{\bf Equal affinization and superpotential.}
The connection between these two sets of data is twofold.  First, we require  the affinizations of $X^{(1)}$ and $X^{(2)}$ to be isomorphic as toric varieties.  
This means that the monoid of global characters in 
$$
\Gamma(Y^{(1)}, \operatorname{Sym}^\bullet \mathcal{V}^{(1)})
$$
is isomorphic to the monoid of global characters in 
$$
\Gamma(Y^{(2)}, \operatorname{Sym}^\bullet \mathcal{V}^{(2)}).
$$
Geometrically, this means there is a single affine toric variety $X^{(0)}$ and toric isomorphisms
$$
X^{(0)} \cong \operatorname{Spec} \Gamma(Y^{(i)}, \operatorname{Sym}^\bullet \mathcal{V}^{(i)}).
$$ 
Consequentially, the tori $T^{(1)}$, $T^{(2)}$ and $T^{(0)}$ are isomorphic.  Note that it is the fact that the line bundles are nef, and therefore generated by global sections, that guarantees that tori $T^{(1)}$ and $T^{(2)}$ map isomorphically to $T^{(0)}$ under the universal map to the affinization.

The second connection between the two sets of data is the requirement that the morphisms $W^{(i)}$ are  related through $X^{(0)}$.  The fact that $X^{(0)}$ is the affinization of both $X^{(i)}$'s guarantees that both the $W^{(i)}$'s are global functions in $\mathcal{O}_{X_0}$. Therefore, the both define morphisms $X_0 \to \mathbf{A}^1$.  We require that these two morphism agree.  From now on, we will simply write $W$ for $W^{(i)}$.

Writing simply $T$ for the isomorphic tori $T^{(0)}, T^{(1)}$ and $T^{(2)}$, these spaces fit together in the commutative diagram:
\begin{equation}
\label{equation:X's-diagram}
\begin{tikzcd}
\  &  &T  \arrow[left hook->]{dl}  \arrow[hook]{dr}  \\
 &     X^{(1)} \arrow[two heads]{dr} \arrow[two heads]{d} & & X^{(2)} \arrow[two heads]{dl}  \arrow[two heads]{d} \\
Z^{(1)} \arrow[hook]{r} &     Y^{(1)}  & X^{(0)} \arrow{d}{W}& Y^{(2)} &\arrow[left hook->]{l} Z^{(2)}\\
  &     & \ \mathbf{A}^1 .\\
\end{tikzcd}
\end{equation}
In addition, each $Y^{(i)}$ has its own torus $T_{Y^{(i)}}$, and the maps $T \to T_{Y^{(i)}}$  are surjective homomorphisms.

{\bf Batyrev-Nill's Example 5.1: $W \colon X^{(0)} \to \mathbf{A}^1$.}  The original example of Batyrev and Nill proposes that the family of complete intersections given by the bundle
$$
\mathcal{V}^{(1)} = \mathcal{O}(1) \oplus \mathcal{O}(1) \text{ over } \mathbf{P}^3 
$$
restricted to the surface 
$$
Y^{(1)} = \{ [X:Y:Z:W] \ | \ Y^2 = XZ \}
$$
has the same mirror family as the family of complete intersections given by the bundle 
$$
\mathcal{V}^{(2)} = \mathcal{O}(2,1) \oplus \mathcal{O}(0,1) \text{ over } Y^{(2)} = \mathbf{P}^1 \times \mathbf{P}^1.
$$
In this example, the affine toric variety $X^{(0)}$ is the spectrum of the subring
$$
\Gamma(Y^{(1)}, \operatorname{Sym}^\bullet \mathcal{V}^{(1)}) \cong 
\mathbf{C}[
\underbrace{\frac{y}{x} s, y s, xy s, s}_{\Gamma(Y^{(1)}, \mathcal{O}(1))}
,  
\underbrace{\frac{t}{x},  x t, \frac{t}{y}, t}_{\Gamma(Y^{(1)}, \mathcal{O}(1))}
] 
\subseteq \mathbf{C}(x,y,s,t)
$$
which is isomorphic to the subring
$$
\Gamma(Y^{(2)}, \operatorname{Sym}^\bullet \mathcal{V}^{(2)}) \cong 
\mathbf{C}[
\underbrace{\frac{w}{z}t, wt, zwt, \frac{t}{z}, zt, t}_{\Gamma(Y^{(2)}, \mathcal{O}(2,1))},
\underbrace{\frac{s}{w}, s}_{\Gamma(Y^{(2)}, \mathcal{O}(0,1))}
] 
\subseteq \mathbf{C}(z,w,s,t)
$$
via the substitutions
$$
x = z \text{ \ \ and \ \ } y = w \frac{t}{s} \ \ .
$$
In both cases, the sections $s$ and $t$ trivalize the line bundles over the torus.  The functions $x$ and $y$ give character coordinates on $Y^{(1)}$, and the
functions $z$ and $w$ give character coordinates on $Y^{(2)}$.

The superpotential with generic coefficients $c$ is the sum of eight terms
$$
\begin{array}{rcllllllll}
W &  = &  
c_{(-1, 1)} \frac{y}{x} s & + c_{(0,1)} y s &  +  c_{(1,1)} xy s &  + 
c_{(-1,0)} \frac{t}{x} &  +   c_{(1,0)} x t & + c_{(0,-1)} \frac{t}{y} &  + 
c_s s &  +  c_t t 
\\
& = & 
c_{(-1,1)} \frac{w}{z}t &  + c_{(0,1)} wt &  + c_{(1,1)} zwt & + c_{(-1,0)} \frac{t}{z} & + c_{(1,0)} zt & + 
c_{(0,-1)} \frac{s}{w}  & + c_s s  & + c_t t .
\end{array}
$$
Later, we will see that the indexing of the coefficients comes from the lattice points in the convex hull of the polytopes making up the relevant nef-partitions.  The coefficients $c_s$ and $c_t$ are indexed differently because they will correspond to the origin in the nef-partition, and there is one origin term for each polytope.
These origin terms always correspond to the local trivializing sections of the associated line bundles.

\subsection{Expansions.}
\label{subsection:expansions}

There are several expansions of the function $W$ that are important.  
These come initially from three sources: the toric structure of the $X$'s, and the splittings of $\mathcal{V}^{(i)}$'s into the $\mathcal{L}^{(i)}$'s for $i = 1,2$.  
Finally, $W$ is expanded in a way that respects all three interpretations simultaneously.

An important point is that these expansions correspond to  set partitions of the characters $\Xi$
appearing with non-zero coefficients in the expansion of $W$.  In other words, a character appears in at most one term of any of our expansions.

It is  convenient to work on the torus $T$.  
%Note however, the characters and thus all the terms in all the expansions of $W$ extend to regular functions on $X^{(0)}$ and thus also the  $X^{(i)}$'s.
$T$  is an $n+k$ dimensional algebraic torus
and on it
any function on $X^{(0)}, X^{(1)}$  and $X^{(2)},$ can be considered as a function.  
Furthermore, $T$ maps surjectively onto the big tori $T_{Y^{(i)}}$, so functions on $Y^{(i)}$ can be considered on $T$ as well.

{\bf Characters.} First $W$ can be expanded in characters on $T$:
$$
W = \sum_{\chi \in \Xi} c_\chi \chi,
$$
where $\Xi$ is simply the set of characters appearing with non-zero coefficient.

{\bf Bundle splitting.} In addition, thinking of $W = g^{(i)} \in \mathcal{V}^{(i)} = \mathcal{L}^{(i)}_1 \oplus \dotsm \oplus \mathcal{L}^{(i)}_r$  we can write 
$$
g^{(i)} = \ell^{(i)}_1 + \dotsc + \ell^{(i)}_r
$$
for global sections $\ell^{(i)}_k \in \mathcal{L}^{(i)}_k$.  
Again each $\ell^{(i)}_k$ expands in characters, so we can view this part  of  $W$'s expansion.  This way for each $i = 1, 2$ we get two expansions
$$
W = \sum_k W^{(i)}_k
$$
where $W^{(i)}_k$ is the section $\ell^{(i)}_k$ considered as a function on $T$.

These bundle splitting expansions correspond to a grouping of the characters expanding $W$.
This is due to the uniqueness of the expansion of $W$ in characters and the fact that the line bundle sections cannot have characters in common.  
Indeed, each line bundle has its own ``locally trivializing'' character, that separates the terms of its section from the other line bundles.  
In symbols, as before we have
$$
\Xi = \{ \chi  |  \chi \text{ is a character with non-zero coefficient in the expansion of $W$ in characters} \},
$$
and now we have 
$$
\Lambda^{(i)}_k= \{ \chi \in \Xi | c_\chi \chi \text{ is a term in the character expansion of } W^{(i)}_k \}.
$$
Furthermore,
$$
\Xi = \coprod_{k =1}^r \Lambda^{(i)}_k.
$$

{\bf Simultaneous expansion.} 
Now we can refine the grouping of the terms of $W$.  For this we index the line bundles making up $\mathcal{V}^{(1)}$ by $a$, and those making up $\mathcal{V}^{(2)}$ by $b$.  
The simultaneous refinement of the two set partitions 
$$
\Xi = \coprod_{a} \Lambda^{(1)}_a = \coprod_{b} \Lambda^{(2)}_b
$$
leads to the expansion
$$
W = \sum_{a,b} W_{ab}
$$
where 
$$
W_{ab} = \sum_{\chi \in \Lambda^{(1)}_a \cap \Lambda^{(2)}_b} c_\chi \chi. 
$$
Finally, grouping these terms gives back the terms bundle splitting expansions:
$$
W^{(1)}_a = \sum_b W_{ab}
$$
and 
$$
W^{(2)}_b = \sum_a W_{ab}.
$$

\subsubsection{Organizing the $W_{ab}$'s: $D$ and $(W)$.}
Juggling the behavior of these expansions is central to our treatment.  We introduce a directed graph $D$ that organizes the terms.  The $W_{ab}$'s are then arranged in a matrix $(W)$, and the graph $D$ governs its structure.  Ultimately, it is the insight we get into the terms from analyzing the graph that allows us to prove our results and apply them to nef-partitions.

{\bf Graph and matrix.} The directed graph $D$ has vertices indexed $1, \dotsc, r$  and an arrow from $a$ to $b$ when $\Lambda^{(1)}_a \cap \Lambda^{(2)}_b \neq \varnothing.$  Equivalently, there is an arrow from $a$ to $b$ when $W_{ab} \not \equiv 0$.  
$D$ splits into connected components $D_{(j)}$, and appropriately indexing the $\mathcal{L}$'s  
we can make the matrix 
 $$
(W) = (W_{ab})_{ab}
 $$
 into a block diagonal matrix whose blocks $(W_{(j)})$ correspond to the connected components $D_{(j)}$.  The row/column index set for the block is the vertex set of $D_{(j)}$, and the non-zero entries of the block correspond to $D_{(j)}$'s arrows.
 
 A crucial property of $(W)$ is that summing the entries in the $k^{\text{th}}$-row gives $W^{(1)}_k$, and
 summing the entries in the $k^{\text{th}}$-column gives $W^{(2)}_k$.

 We write $d_j$ for the number of vertices in $D_{(j)}$.  This means the index set for the $j^\text{th}$ block is
$$d_1+ \dotsc + d_{j-1}+1 \  , \  d_1+ \dotsc + d_{j-1}+2 \ , \ \dotsc \ , \ d_1+ \dotsc + d_{j-1}+d_j .$$
Finally, we write $\beta$ for the number of connected components in $D$, so the index $j$ runs from $1$ to $\beta$.

Although we will not use it in our birationality result and the application to nef-partitions, the expansion
$$
W = \sum_j W_{(j)}.
$$
where 
$$
W_{(j)} = \sum_{a,b \in D_{(j)}} W_{ab}
$$
is a ``block function,'' and the terms $W_{(j)}$ naturally arise in the discussion on multiple mirror Fano manifolds at the end of the paper.

{\bf Batyrev-Nill's Example 5.1: $(W)$ and $D$.} Recall that in the example of Bayrev and Nill we have the identifications $x = z$ and $y = w \frac{t}{s}$.  In this example we have
$$
(W) = 
\begin{pmatrix}
c_s s &  c_{(-1, 1)} \frac{y}{x} s  + c_{(0,1)} y s + c_{(1,1)} xys\\
 c_{(0,-1)} \frac{t}{y}  & c_{(-1,0)} \frac{t}{x}   +   c_{(1,0)} x t + c_t t 
\end{pmatrix}
=
\begin{pmatrix}
c_s s &  c_{(-1, 1)} \frac{w}{z} t  + c_{(0,1)} w t + c_{(1,1)} zwt \\
 c_{(0,-1)} \frac{s}{w}  & c_{(-1,0)} \frac{t}{z}   +   c_{(1,0)} z t + c_t t 
\end{pmatrix}
$$
and the graph $D$  is
$$
\begin{tikzpicture}[>=latex']

\node[vertex] (a) at (-1,0) {};
\node[vertex] (b) at (1,0) {};

\draw[->] (a) to [bend right=30] (b);
\draw[->] (b) to [bend right=30] (a);

\draw[->] (a) to [out=130, in= 220, looseness=15] (a);
\draw[->] (b) to [out=-50, in= 40, looseness=15] (b);
\end{tikzpicture}.
$$

\subsection{Assumptions and factorizations.}
\label{subsection:assumptions}
There are two assumptions required for the general theorem.  
The first assumption is always true for multiple mirror nef-partitions.
On the other hand the second assumption  does not hold
for multiple mirror nef-partitions until certain general coefficients are specialized, 
but this is easily dealt with in our application.

{\bf 1.} 
The first assumption is that for all $k$,
$$
\Lambda^{(1)}_k \cap \Lambda^{(2)}_k \neq \varnothing.
$$
Given this, we  make a choice  $\chi_{kk} \in \Lambda^{(1)}_k \cap \Lambda^{(2)}_k$.  Ultimately, our constructions are independent of this choice.  Note that if this condition isn't satisfied by the original indexing, it is possible that it might be after permuting the labels of the $\mathcal{L}^{(2)}$'s.  So we really require that this is true for some permutation of the $\mathcal{L}^{(2)}$ labels; in which case we relabel and then form the blocks $D_{(j)}$ etc.

{\bf 2.} The second assumption is that for $i=1,2$ the set
$$
\bigcup_k \{ \frac{\chi}{\ \chi'} \ | \ \chi, \chi' \in \Lambda^{(i)}_k \} 
$$
generates (by products) all characters on $T_{Y^{(i)}}$.

{\bf First and second factorization.}
Assumption 1 guarantees the matrix $(W)$ also admits two factorizations over the torus $T$.  The first is of the form 
$$
(W)  = \operatorname{diag}(x)F^{(1)}
$$
where the entries of the diagonal matrix $\operatorname{diag}(x)$ are the chosen characters $\chi_{kk}$, 
and the matrix $F^{(1)}$ depends only on the coordinates on $Y^{(1)}$.
The second is similar, but 
$$
(W)  = F^{(2)} \operatorname{diag}(x)
$$
where the diagonal matrix $\operatorname{diag}(x)$ is the same, and the matrix $F^{(2)}$
depends only on the coordinates on $Y^{(2)}$.  
These exist because they reflect the fact that when $\mathcal{L}^{(i)}_k$ is trivialized by the section corresponding to  $\chi_{kk}$
we can factor out $\chi_{kk}$ from the $k^{\text{th}}$ row in the $i=1$ case, and the $k^{\text{th}}$ column in the $i=2$ case.

{\bf Rescaling fibers.}
There are actions on the torus $T$ that come from ``rescaling'' in the fiber direction.  
We will use these in some of our arguments, so it is important that they are clearly understood.
The main thing to keep in mind is that there is not one fiber direction but two:
it depends on whether we are looking from the point of view of $X^{(1)}$ over $Y^{(1)}$ or $X^{(2)}$ over $Y^{(2)}$.
From the point of view of the first factorization, rescaling in the $X^{(1)}$ fiber direction amounts to rescaling the entries of $\operatorname{diag}(x)$ and leaving $F^{(1)}$ fixed.  
On the other hand, for the second factorization rescaling in the $X^{(1)}$ fiber direction will again rescale  the entries of $\operatorname{diag}(x)$, but the entries of $F^{(2)}$ will not be fixed.  Indeed, we have
$$
F^{(2)} = \operatorname{diag}(x) \ F^{(1)} \operatorname{diag}(x)^{-1},
$$
so $F^{(2)}$ is fixed by an $X^{(1)}$-rescaling if and only if the rescaling commutes with $F^{(1)}$---in particular if the scaling is constant on the blocks $(W_{(j)})$ of $(W)$.
Likewise for $X^{(2)}$-rescalings.

Finally note that the content of Assumption 2 is that all points in $Y^{(i)}$ are fixed by a rescaling if and only if $F^{(i)}$ is fixed by the same rescaling.  Indeed, if not there would be some character on $Y^{(i)}$ that would have non-zero weight under the rescaling.  This would be impossible if all the entries of $F^{(i)}$ had weight zero and their ratios generate all characters on $Y^{(i)}$, as Assumption 2 implies.

{\bf Batyrev-Nill's Example 5.1: factorizations.}  In the example of Batyrev and Nill, 
$$
\operatorname{diag}(x) = 
\begin{pmatrix}
s & 0 \\
0 & t
\end{pmatrix},
$$
and the factorizations are 
$$
\operatorname{diag}(x) F^{(1)} = 
\begin{pmatrix}
s & 0 \\
0 & t
\end{pmatrix}
\begin{pmatrix}
c_s  &  c_{(-1, 1)} \frac{y}{x}   + c_{(0,1)} y  + c_{(1,1)} xy \\
 c_{(0,-1)} \frac{1}{y}  & c_{(-1,0)} \frac{1}{x}   +   c_{(1,0)} x  + c_t  
\end{pmatrix} 
$$
and 
$$
F^{(2)} 
\operatorname{diag}(x) 
= 
\begin{pmatrix}
c_s  &  c_{(-1, 1)} \frac{w}{z}   + c_{(0,1)} w  + c_{(1,1)} zw\\
 c_{(0,-1)} \frac{1}{w}  & c_{(-1,0)} \frac{1}{z}   +   c_{(1,0)} z  + c_t  
\end{pmatrix}
\begin{pmatrix}
s & 0 \\
0 & t
\end{pmatrix} .
$$

\subsection{Geography.}
\label{subsection:geography}

There are several subsets of the torus $T$ that are important to our considerations.  
These all have descriptions in terms of the matrix $(W)$, however most  are naturally related to the complete intersection $Z$-sets.
Note that the openness of the $\Omega$ and $O$-sets below does not depend on our assumptions. 
Thus these sets will be open in the nef-partition application even before specializing coefficients.

{\bf Sets over $Z^{(i)}$.} The first subsets are those over the complete intersections $Z^{(i)}$.
Write $S^{(1)}$ for the subset of $T$ which lies over $Z^{(1)}$. In terms of the matrix $(W)$, we can identify $S^{(1)}$ with the set on which 
$$
(W) \begin{pmatrix} 1 \\ \vdots \\ 1 \end{pmatrix} = 0.
$$
In the same way we have the subset $S^{(2)}$ of $T$ which lies over $Z^{(2)}$, and similarly $S^{(2)}$ is the same as the set where 
$$
\begin{pmatrix} 1 , \dotsc , 1 \end{pmatrix} (W) = 0.
$$

{\bf Maximum rank subsets.}  Now that the $S$-sets are defined, the natural set to consider is $S^{(1)} \cap S^{(2)}.$
However, we will use the matrix $(W)$ to analyze this set, so we must first restrict to a subset on which $(W)$ is well behaved.
Specifically, on $S^{(i)}$ the rank of $(W)$ is at most $k-\beta$ where $k$ is the rank of $\mathcal{V}^{(i)}$ and $\beta$ is the number of blocks in the block diagonal decomposition of $(W)$.  

With this in mind, we define $R^{(i)}$ to to be the subset of $S^{(i)}$ on which $(W)$ has rank  $k-\beta$.  The set $R^{(i)}$ is open in $S^{(i)}$, and the subset of $S^{(1)} \cap S^{(2)}$ on which $(W)$ has rank $k-\beta$ is  $R^{(1)} \cap R^{(2)}.$  
Notice that $R^{(i)}$ is the preimage of an open set $\rho^{(i)}$ in $Z^{(i)}$ over which the matrix $F^{(i)}$ from the factorization of $(W)$ has rank $k-\beta$.  The openness of these sets can be seen by the fact that on or over $Z^{(i)}$, the rank $k-\beta$ subset is the same as the rank $\geq k-\beta$.

{\bf Unit entry null vector sets.}  
Now we focus attention on the intersection $R^{(1)} \cap R^{(2)}.$  
This is the subset of $S^{(1)} \cap S^{(2)}$ on which $(W)$ is well behaved;  that is to say these are the points in $T$ which lie over both $Z^{(1)}$ and $Z^{(2)}$ and where $(W)$ has rank $k - \beta.$

The image of $R^{(1)} \cap R^{(2)}$ in $Z^{(1)}$ is an open set $\Omega^{(1)} \subseteq \rho^{(1)}$.  In light of Assumption 1, this set can be characterized by the property that the $F^{(1)}$
matrix has rank $k - \beta$  and annihilates a row vector $h$:
$$
hF^{(1)} = 0
$$
whose entries are all non-zero.  In this case, setting $\chi_{kk} = h_k$ guarantees that $(W)$ annihilates both $(1, \dotsc, 1)$
and $\begin{pmatrix} 1 \\ \vdots \\ 1 \end{pmatrix}$.  Similarly, we have $\Omega^{(2)} \subseteq Z^{(2)}$ defined by 
 the property that the $F^{(2)}$
matrix has rank $k - \beta$  and annihilates a column vector $v$:
$$
F^{(2)} v = 0
$$
whose entries are all non-zero.

Finally, we denote the preimages in $T$ of the $\Omega$-sets by $O^{(i)}$, and so $R^{(1)} \cap R^{(2)} = O^{(1)} \cap O^{(2)}$.  
These $O$-sets are useful in that the non-emptyness of $\Omega^{(i)}$ is equivalent to showing the non-emptiness of the $O$'s.
This will be used in our application to multiple mirror nef-partitions.

{\bf Batyrev-Nill's Example 5.1: $Z^{(1)}, S^{(1)}$, $R^{(1)}$, $\Omega^{(1)}$, and $O^{(1)}$.}  In the example of Batyrev and Nill, the  set 
\begin{itemize}
\item 
$Z^{(1)}$ is given by 
$$
  F^{(1)}
  \begin{pmatrix}
  1 \\
  1
  \end{pmatrix}
   = 
\begin{pmatrix}
c_s  +  c_{(-1, 1)} \frac{y}{x}   + c_{(0,1)} y  + c_{(1,1)} xy  \\
 c_{(0,-1)} \frac{1}{y}  + c_{(-1,0)} \frac{1}{x}   +   c_{(1,0)} x  + c_t 
\end{pmatrix} 
=
  \begin{pmatrix}
  0 \\
  0
  \end{pmatrix},
  $$
  with neither $s$ nor $t$ involved,
\item $S^{(1)}$ satisfies the same equations as $Z^{(1)}$ but with $s$ and $t$ free,
  \item
  $R^{(1)}$ lies in $S^{(1)}$  and satisfies $F^{(1)} \neq 0$, 
\item
   $\Omega^{(1)}$ is the union of the open sets 
\begin{itemize}
\item[$\ast$]  $f^{(1)}_{11} f^{(1)}_{21} = c_s( c_{(-1,0)} \frac{1}{x}) \neq 0$, and  
\item[$\ast$] $f^{(1)}_{12} f^{(1)}_{22}  = (c_{(-1, 1)} \frac{y}{x}   + c_{(0,1)} y  + c_{(1,1)} xy)( c_{(-1,0)} \frac{1}{x}   +   c_{(1,0)} x  + c_t) \neq 0$
\end{itemize}
on $Z^{(1)}$ with $F^{(i)} = (f^{(i)}_{ab})_{ab}$,  and
\item $O^{(1)}$ is the same as $\Omega^{(1)}$ but with $s$ and $t$ free.
\end{itemize}

\section{Birationality.}
\label{section:birationality}
Since $R^{(1)} \cap R^{(2)}$ surjects onto $\Omega^{(1)}$ and $\Omega^{(2)}$ by definition, 
the last step is to  identify $R^{(1)}\cap R^{(2)}$ with a trivial $\mathbf{G}_m^{\beta}$-torsor.  
We will find that the sets $\Omega^{(1)}$ and  $\Omega^{(2)}$ are both equal to quotient $(R^{(1)}\cap R^{(2)}) / \mathbf{G}^{\beta}_m$, and thus isomorphic.

\begin{prop}{\bf ($O$ torsor)}
The torus $\mathbf{G}_m^{\beta}$ acts on  $R^{(1)} \cap R^{(2)}$ in such a way that 
 $R^{(1)} \cap R^{(2)}$ is identified with 
$$
\Omega^{(1)} 
\times
\mathbf{G}_m^{\beta} 
$$
on one hand, and
$$ 
\Omega^{(2)}
\times 
\mathbf{G}_m^{\beta} 
$$
on the other.
\end{prop}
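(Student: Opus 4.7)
My plan is to exhibit $\mathbf{G}_m^\beta$ as a subtorus of $T$ that acts on $R^{(1)} \cap R^{(2)}$ so that each projection to $\Omega^{(i)}$ realizes it as a trivial principal $\mathbf{G}_m^\beta$-bundle. I would take $\mathbf{G}_m^\beta$ to be the subtorus of the $X^{(1)}$-fiber rescaling $\mathbf{G}_m^k \subseteq T$ that is constant on the connected components of $D$: an element $\lambda = (\lambda_1, \dots, \lambda_\beta)$ scales each coordinate $\chi_{kk}$ by $\lambda_{j(k)}$, where $j(k)$ is the index of the component $D_{(j)}$ containing $k$. By the observation at the end of Subsection~\ref{subsection:assumptions}, block-constancy forces both $F^{(1)}$ and $F^{(2)}$ to be fixed, and in fact the same subtorus of $T$ arises from block-constant $X^{(2)}$-fiber rescalings. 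Assumption~2 then guarantees that the action is trivial on $Y^{(1)}$ and $Y^{(2)}$, hence on $\Omega^{(1)}$ and $\Omega^{(2)}$.

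Under this action $(W) = \operatorname{diag}(x) F^{(1)}$ transforms by rescaling each block $(W_{(j)})$ by $\lambda_j$, so the $S$-conditions $(W)\mathbf{1}^T = 0$ and $\mathbf{1}(W) = 0$ are preserved, and so is the rank condition cutting out $R^{(i)}$. Thus $R^{(1)} \cap R^{(2)}$ is invariant and the action descends trivially to each $\Omega^{(i)}$. Freeness is immediate since the action is that of a subtorus of $T$ on a subset of $T$.

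Next I would verify transitivity on the fibers of $R^{(1)} \cap R^{(2)} \to \Omega^{(1)}$. Over $z \in \Omega^{(1)}$ the fiber consists of $x \in \mathbf{G}_m^k$ with $\mathbf{1}\, \operatorname{diag}(x) F^{(1)}(z) = 0$, i.e., the non-vanishing vectors in the left kernel of $F^{(1)}(z)$. Block-diagonality decomposes this kernel as $\bigoplus_j \mathbf{C} \cdot h_{(j)}(z)$, and by definition of $\Omega^{(1)}$ each $h_{(j)}(z)$ has all non-zero entries, so the fiber is $\prod_j \mathbf{G}_m \cdot h_{(j)}(z) \cong \mathbf{G}_m^\beta$, on which block-constant rescaling acts transitively.

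Finally, to trivialize the torsor, I would pick one index $k_j \in D_{(j)}$ from each component and send $z \in \Omega^{(1)}$ to the unique fiber point at which $\chi_{k_j k_j} = 1$ for all $j$. This is a regular section because the distinct indices $k_1, \dots, k_\beta$ correspond to independent coordinates on the $X^{(1)}$-fiber direction and each $(h_{(j)})_{k_j}$ is non-vanishing on $\Omega^{(1)}$; combined with the $\mathbf{G}_m^\beta$-action it yields the isomorphism $R^{(1)} \cap R^{(2)} \cong \Omega^{(1)} \times \mathbf{G}_m^\beta$. The symmetric argument, using the right kernel of $F^{(2)}$ and the second factorization, produces the isomorphism with $\Omega^{(2)} \times \mathbf{G}_m^\beta$. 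The main subtlety to pin down carefully is that block-constant rescalings simultaneously respect both factorizations $(W) = \operatorname{diag}(x) F^{(1)} = F^{(2)} \operatorname{diag}(x)$ (so that the same $\mathbf{G}_m^\beta$ works for both sides of the picture); once this observation is in hand via the commutation $\operatorname{diag}(\lambda_{j(k)}) F^{(i)} = F^{(i)} \operatorname{diag}(\lambda_{j(k)})$, everything else is bookkeeping inside the torus $T$.
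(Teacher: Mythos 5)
Your proposal is correct and follows essentially the same route as the paper's proof: the $\mathbf{G}_m^\beta$-action by rescaling the $x$-variables block-by-block, invariance of the projections via Assumption~2, preservation of the rank condition, and trivialization by a section fixing one $x$-coordinate per block to $1$ (the paper chooses the last index $d_1+\dotsm+d_j$ of each block, you choose an arbitrary $k_j \in D_{(j)}$, which is immaterial). Your explicit identification of the fiber over $z \in \Omega^{(1)}$ with the non-vanishing vectors in the block-decomposed left kernel of $F^{(1)}(z)$ is a welcome elaboration of the paper's terser appeal to the rank condition, but it is the same argument.
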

\begin{proof}
An element $(\lambda_j)_j \in \mathbf{G}_m^{\beta}$ acts on a point $t = (y_1, x) = (y_2, x)$ by rescaling the $x$-variables corresponding to the block $(W_{(j)})$ by $\lambda_j$.  
To be sure that if this is done with $t= (y_1, x)$ it doesn't affect the $y_2$-variables, observe that any $y_2$-variable is a ratio of $x$-variables times a $y_1$-variable---this is the content of Assumption 2.  So this gives a well-defined action, and the  projections to $Y^{(1)}$ and $Y^{(2)}$ are invariant.

This has the effect of rescaling the block itself by $\lambda_j$, and doesn't change the rank or eigenspaces.  This means that $R^{(1)} \cap R^{(2)}$ preserved.
The identification of $R^{(1)} \cap R^{(2)}$ with the sets 
$$
\Omega^{(1)} \times
\mathbf{G}_m^{\beta} 
$$
requires the choice of a section over $\Omega^{(1)}$ in $R^{(1)} \cap R^{(2)}$.  
This means a choice of $x$-variables for each $z_1 \in \Omega^{(1)}$.  One of many ways to do this is to require that $x_k = 1$
whenever $k = d_1 + \dotsm + d_j$.  This means the last $x$-entry appearing in the $(W_{(j)})$ block is set to $1$.  The rank condition defining the $R$-sets guarantees that this fixes all the rest of the $x$-variables. This section is regular over $\Omega^{(1)}$ since the values of the other $x$-variables can be read off from an echelon form of $(W)$.  

The proof is complete by the identical argument for the $Z^{(2)}$ side.
\end{proof}

\begin{thm}{\bf  ($\Omega$ isomorphism)}
\label{theorem:omega-iso}
The open sets $\Omega^{(1)} \subseteq Z_1$ and $\Omega^{(2)} \subseteq Z_2$ are isomorphic.
\end{thm}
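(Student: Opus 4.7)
The plan is to deduce the theorem as an almost immediate corollary of the preceding Proposition ($O$ torsor). That proposition provides a $\mathbf{G}_m^\beta$-action on $R^{(1)} \cap R^{(2)}$ together with two trivializations
\[
R^{(1)} \cap R^{(2)} \;\cong\; \Omega^{(1)} \times \mathbf{G}_m^\beta \quad \text{and} \quad R^{(1)} \cap R^{(2)} \;\cong\; \Omega^{(2)} \times \mathbf{G}_m^\beta
\]
as $\mathbf{G}_m^\beta$-spaces, where the projection to the first factor on each side is, respectively, the projection from $T$ to $Y^{(1)}$ and the projection from $T$ to $Y^{(2)}$ (restricted to the $O$-sets).

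The key step is to take the categorical (equivalently, geometric) quotient by $\mathbf{G}_m^\beta$ on both sides. From the first trivialization, $(R^{(1)}\cap R^{(2)})/\mathbf{G}_m^\beta \cong \Omega^{(1)}$, canonically identified via the projection to $Y^{(1)}$; from the second trivialization, the same quotient is canonically identified with $\Omega^{(2)}$ via projection to $Y^{(2)}$. Composing these two identifications yields the desired isomorphism $\Omega^{(1)} \cong \Omega^{(2)}$. Concretely, one can exhibit the isomorphism by following a point $z_1 \in \Omega^{(1)}$ through the section chosen in the proof of the proposition (setting $x_k = 1$ for the last $x$-variable in each block $(W_{(j)})$), landing in $R^{(1)}\cap R^{(2)}$, and then projecting to $Y^{(2)}$; the inverse is defined symmetrically.

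Since the proposition has done the heavy lifting, there is essentially no obstacle here. The only thing worth emphasizing is that the identifications in the proposition are as varieties (not merely as sets), which is ensured by the regularity of the chosen section over each $\Omega^{(i)}$, as noted in the proof of the proposition. Hence the induced map $\Omega^{(1)}\to\Omega^{(2)}$ and its inverse are both morphisms of schemes, completing the proof.
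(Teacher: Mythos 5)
Your proposal is correct and follows essentially the same route as the paper, whose proof is the one-line observation that $\Omega^{(1)}$ and $\Omega^{(2)}$ both equal the quotient $(R^{(1)}\cap R^{(2)})/\mathbf{G}_m^\beta$. The extra detail you supply (exhibiting the map via the chosen section and noting the regularity that makes the identifications scheme-theoretic) is a sound elaboration of the same argument.
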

\begin{proof}
$\Omega^{(1)}$ and $\Omega^{(2)}$ both equal the quotient of $R^{(1)} \cap R^{(2)}$ by the action of $\mathbf{G}_m^\beta.$
\end{proof}

{\bf Batyrev-Nill's Example 5.1: torsor.}  In the example of Batyrev and Nill, the torsor  $\mathbf{G}_m^1 \times \Omega^{(1)}$ is  the set of points over $\Omega^{(1)}$ for which $(s,t)$ is a null row vector of the matrix $F^{(1)}$.  For instance, if we write the entries of $F^{(i)}$ as $f^{(i)}_{ab}$, the torsor as a subset of $T$ over $\Omega^{(1)}$ is given by the equations
$$
%f^{(1)}_{11} + f^{(1)}_{12}  \ = \ 
%f^{(1)}_{21} + f^{(1)}_{22} \ = \ 
(s f^{(1)}_{21} - t f^{(1)}_{11})  =  0 \text{  \quad  and \quad  } (s f^{(1)}_{22} - t f^{(1)}_{12})  =  0.
$$
Over $\Omega^{(2)}$ in $T$ we need $s$ and $t$  to give a null column vector for $F^{(2)}$. So the equations are
$$
%f^{(2)}_{11} + f^{(2)}_{21}  \ = \ 
%f^{(2)}_{12} + f^{(2)}_{22} \ = \ 
(s f^{(2)}_{12} - t f^{(2)}_{11}) = 0
\text{  \quad  and \quad  }
(s f^{(2)}_{22} - t f^{(2)}_{21})  =  0.
$$

\section{Application to multiple mirror nef-partitions.}
\label{section:mm-nef}

Our result applied to multiple mirror nef-partitions completely resolves the question posted by Batyrev-Nill \cite{Batyrev-Nill} on the birationality of the resulting families of Calabi-Yaus.  This completes the groundbreaking work of Li \cite{li-2016} by removing any need for the assumptions that the Calabi-Yaus and  a certain determinantal variety are irreducible.

\subsection{Nef-partition preparations.}
Below is a summary of the definitions and results we need from the theory of nef-partitions.  
More complete treatments can be found in Borisov \cite{Borisov},
Batyrev-Nill \cite{Batyrev-Nill},
Zhang \cite{li-2016} and
Clarke \cite{clarke-dual-fans}.

Nef-partitions are collections of compact, convex, integral polytopes.  
As their name suggests, they encode a collection of nef line bundles.
Their basic properties are that each polytope contains the origin and their sum is reflexive---that is to say the polar dual of their sum is also a compact, convex, integral polytope.
Borisov \cite{Borisov} introduced nef-partitions and at the same time a duality transformation on them.
This duality remains the primary means of obtaining examples of mirror Calabi-Yau manifolds.

The multiple mirror phenomenon arises when it is possible to translate the polytopes in a nef-partition in such a way that the result is again a nef-partition.  
Translation of polytopes does not alter the line bundles described by the polytopes; however, it does significantly alter the Borisov-dual nef-partition.

\subsubsection{Polytopes.}
The theory of nef-partitions is based on that of convex polytopes.  
Essentially all one needs are the notions of reflexivity and Minkowski sum, and a rich an complicated theory emerges.

{\bf Convex polytopes.}
Given a finite rank free abelian group $\mathfrak{A}$, an integral, compact, convex, polytope $P$ is the convex hull of a finite subset $S \subseteq \mathfrak{A}$ in the real vector space $V = \mathfrak{A} \otimes \mathbf{R}.$
We say $P$ is non-zero if $P \neq \{ 0 \}$.
The  polar dual polytope $P^\vee$ is the set of points $w$ in the dual space $V^\vee \cong \mathfrak{A}^\vee \otimes \mathbf{R}$ such that 
$$
\langle w, p \rangle \geq -1 \ \ \text{ for all $p \in P.$}
$$
An integral, compact, convex polytope $P$ is called reflexive if  $P^\vee$ is an integral, compact, convex polytope.

{\bf Nef-partitions.}
Let $N$ be a rank $n$ free abelian group.  A nef-partition is a collection $\nabla_1, \dots, \nabla_r \subseteq N_\mathbf{R} = N \otimes \mathbf{R}$ of non-zero, integral, compact, convex polytopes such that 
\begin{itemize}
\item $0 \in \nabla_k \text{ for all $k$},$ and 
\item the Minkowski sum $\sum_k \nabla_k$ is a reflexive polytope.
\end{itemize}
{Borisov-duality of nef partitions begins with $M = \operatorname{Hom}_\mathbf{Z}(N, \mathbf{Z})$,
and defines the dual nef partition $\Delta_1, \ldots, \Delta_r \subseteq \mathbf{M}_\mathbf{R}$ by 
\begin{equation}
\label{equation:nef-inequality}
\langle m, n \rangle + \delta(k',k) \geq 0 \text{ whenever $m \in \nabla_{k'}$ and $n \in \Delta_{k}$.}
\end{equation}
Here  $\delta(k',k)$ is the Kronecker delta.

\subsubsection{Geometry of a nef-partition.}
The polytopal data of a nef-partition defines geometric data of the kind that is considered in our theorem.  More accurately, one half of the geometry in the birationality statement.  
Specifically, we get a toric variety $Y$, a split bundle $\mathcal{V} = \mathcal{L}_1 \oplus \dotsm \oplus \mathcal{L}_r$, and a global section $g$ of this bundle.    
As before, the bundle and section are considered as a space $X$ and a function $W$ on it.  
Below, we will consider multiple mirror nef-partitions.  These will have the data of a pair of nef-partitions whose geometric data fits together as needed to apply the birationality theorem.

This geometrical data of a single nef-parition is presented first as a projective toric variety $Y_-$ with a split bundle $\mathcal{V}_- = (\mathcal{L}_-)_1 \oplus \dotsm \oplus (\mathcal{L}_-)_r$ over it.  Initially no section is specified.  However, the universal section $g$ of  $\mathcal{V}_-$ naturally lives in the pullback $\mathcal{V}$ of  $\mathcal{V}_-$ to an enlargement $Y = Y_- \times C$ of $Y_-$  by ``coefficients'' $C$.  At this point, the picture is as before: $X$ is then the total space of dual bundle $\mathcal{V}^\vee$, and $W$ is the section $g$ considered as a function on $X$.
Abstractly the affinization is as before, but we can give a more explicit description of it in terms of the cone $\kappa$ of global characters on $X_-$.

\begin{prop}{\bf (nef geometry)}
A nef partition $\Delta_1, \dotsm, \Delta_r \subseteq M_\mathbf{R}$ defines
\begin{itemize}
\item a projective toric variety $Y_-$ given by the polytope $\Delta_1+ \dotsm + \Delta_r$,
\item a completely split vector bundle $\mathcal{V}_- = (\mathcal{L}_-)_1 \oplus \dotsm \oplus (\mathcal{L}_-)_r$, and 
\item a choice of  trivialization by a global section ${e_k}$ for each $(\mathcal{L}_-)_k$ over the big torus $T_{Y_-}$ in $Y_-$ 
 \end{itemize}
 such that 
\begin{itemize}
\item the character group of \ $Y_-$ is $M$, 
\item a character $m$ times the trivializing section $e_k$ is a global section of $(\mathcal{L}_-)_k$ if and only if $m \in \Delta_k$, and
\item $e_k$ is a character in the natural toric structure on the total space $X_-$ of $\mathcal{V}_-^\vee$.
\end{itemize}
\end{prop}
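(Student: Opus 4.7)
The plan is a construction followed by verification: essentially every item on the list will follow from the toric dictionary summarized in Section 2, and the one nontrivial point is checking that each $\Delta_k$ is automatically the polytope of a nef divisor on $Y_\Delta$.

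First I would set $\Delta := \Delta_1 + \dotsm + \Delta_r$. By hypothesis $\Delta$ is reflexive, so in particular it is a full-dimensional, convex, compact, integral polytope in $M_\mathbf{R}$, and the construction recalled at the end of Section 2 produces $Y_- := Y_\Delta$ as a projective toric variety with character group $M$. This gives the first and fourth bullets at once.

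Next I would produce the line bundles $(\mathcal{L}_-)_k$. By the criterion at the end of Section 2, $\Delta_k$ is the polytope of a torus-invariant nef divisor $D_k$ on $Y_-$ precisely when it is an intersection of integral translates of the supporting half-spaces of the facets of $\Delta$, equivalently when its normal fan is coarsened by that of $\Delta$. This is automatic from the Minkowski sum structure: the normal fan of a Minkowski sum is the common refinement of the normal fans of the summands, so the normal fan of $\Delta$ refines the normal fan of each $\Delta_k$. Set $(\mathcal{L}_-)_k := \mathcal{O}(D_k)$. The standard toric description of global sections of a nef line bundle then yields exactly the second middle bullet: a character $\chi^m$ times a fixed trivialization extends to a global section of $(\mathcal{L}_-)_k$ iff $m \in \Delta_k \cap M$.

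For the trivializations, since $0 \in \Delta_k$ the character $\chi^0 = 1$ corresponds to a global section $e_k \in \Gamma(Y_-,(\mathcal{L}_-)_k)$ which on the open torus $T_{Y_-}$ is just the constant function $1$, hence is nowhere vanishing there and trivializes $(\mathcal{L}_-)_k$ over $T_{Y_-}$. The divisor of $e_k$ is the torus-invariant divisor $D_k$, so the "Line bundles and total spaces" paragraph of Section 2 applies to give a toric structure on the total space of each $(\mathcal{L}_-)_k^\vee$ with $e_k$ appearing as a character on the fiber; iterating this for $k = 1, \dotsc, r$ equips $X_-$ with its toric structure and exhibits each $e_k$ as a character on $X_-$. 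The main (indeed, only) obstacle beyond bookkeeping is the normal-fan observation used to exhibit $\Delta_k$ as the polytope of a nef divisor, and that is standard convex geometry.
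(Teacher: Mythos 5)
Your argument is correct, and since the paper marks this proof as ``Omitted'' there is no competing argument to compare it against: what you have written is precisely the routine verification from the toric dictionary of Section 2 that the author is taking for granted. The two points worth having made explicit are exactly the ones you isolate --- reflexivity of $\Delta_1 + \dotsm + \Delta_r$ forces full-dimensionality (so $Y_-$ has character group all of $M$), and the normal fan of the Minkowski sum refines that of each summand, so each $\Delta_k$ is the polytope of a basepoint-free divisor on $Y_-$ with $\chi^0 = e_k$ among its sections because $0 \in \Delta_k$.
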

\begin{proof}
Omitted
\end{proof}

{\bf Characters and affinization.}
The character group $\overline{M}$ on the total space $X_-$ of $\mathcal{V}_-^\vee$ defined by a nef-partition $\Delta_1, \dotsm, \Delta_r \subseteq M_\mathbf{R}$ is naturally identified with 
$$
M \oplus \bigoplus_k \mathbf{R} \cdot e_k
$$
for the trivializing global sections $e_k$.  

The integral points of  $\Delta_k + e_k \subseteq \overline{M}$ give the ``character basis'' of 
the global sections $\Gamma(Y_-, (\mathcal{L}_-)_k)$.  Consequently,  
the global characters on $X_-$ are the integral points of a cone $\kappa$ which is the $\mathbf{R}_{\geq 0}$-span of a polytope $\tilde{\Delta}$ which is the convex hull of the union the $\Delta_k + e_k$'s:
$$
\tilde{\Delta} = \operatorname{conv}( \cup_k   \Delta_k + e_k).
$$
The integral points of $\tilde{\Delta}$ itself give the character basis of 
the global sections $\Gamma(Y_-, (\mathcal{L}_-)_1 \oplus \dotsm \oplus (\mathcal{L}_-)_1).$
%Later the characters which appear in the expansion of the superpotential will be exactly these integral points.

Equipped with this description, the affinization $X_-^{(0)}$ of $X_-$ is exactly 
$$
X^{(0)}_- = \operatorname{Spec} \mathbf{C}[\kappa \cap \overline{M}].
$$

{\bf Coefficents.}
Since the integral points of $\tilde{\Delta}$  give the character basis for 
the global sections $\Gamma(Y_-, (\mathcal{L}_-)_1 \oplus \dotsm \oplus (\mathcal{L}_-)_k)$, 
these points give the natural index set for coefficients in the expansion of $W$.  With this in mind, 
we define the coefficient torus 
$$
C = \mathbf{C}[c_{\overline{m}}^{\pm 1}]_{\overline{m} \in \tilde{\Delta} \cap \overline{M}}.
$$
We then set $Y = Y_- \times C$,  $X = X_{-} \times C$, and 
$$
W = \sum_{\overline{m} \in  \tilde{\Delta} \cap \overline{M}} c_{\overline{m}}  \ \chi^{\overline{m}} \  \  \colon  \  \ X^{(0)} \to \mathbf{A}^1.
$$
Consequently,  $X^{(0)} =  X_{-}^{(0)} \times C$.

{\bf Batyrev-Nill's Example 5.1: indexing note.}  For this example, we use a slightly different index set.  
Essentially, we use the integral points of the convex hull $\operatorname{conv}( \cup_k   \Delta_k) \subseteq M_\mathbf{R}.$  This is done  so that our indices have two entries rather than four. The trade-off is that we are required to index the ``origin terms'' differently.  Below the $\Delta$-sets are given explicitly, and one can match  integral points there with the indices of the coefficients in the expansion of $W$ at the end of Subsection \ref{subsection:geometries}.

\subsubsection{Multiple mirror geometry.}
Our constructions above have been made in preparation for considering two nef partitions 
$\Delta^{(1)}_1,   \dotsc ,  \Delta^{(1)}_r$ and
$\Delta^{(2)}_1,   \dotsc ,  \Delta^{(2)}_r$
in $M_\mathbf{R}$.  For multiple mirror nef-partitions this pair is not arbitrary. 
Their relationship is that their Borisov dual nef-partitions are translates of each other. 
To describe this precisely, we begin with a single nef-partition
$\nabla^{(1)}_1, \dotsc, \nabla^{(1)}_r \subseteq N_{\mathbf{R}}$.
Recall, $N = M^\vee = \operatorname{Hom}_\mathbf{Z}(M, N)$.

Given a nef-partition $\nabla^{(1)}_1, \dotsc, \nabla^{(1)}_r \subseteq N_{\mathbf{R}}$,
it is sometimes possible to choose elements $n_k \in N$, not all of which are zero, such that the translations $$
\nabla^{(2)}_k = \nabla^{(1)}_k + n_k
$$ 
again form a nef-partition.  
A consequence of the definition of a nef-partition is that we can characterize such  $n_k$'s as those which satisfy two properties:
\begin{enumerate}
\item $-n_k \in \nabla^{(1)}_k$ for all $k$, and 
\item $\sum_k n_k = 0$.
\end{enumerate}

Geometrically, the existence of such $n_k$'s creates a multiple mirror situation.  With regard to the $\nabla$'s, translation by the $n_k$'s has little effect.  The Minkowski sum is preserved:
$$
\nabla^{(1)}_1 +  \dotsm + \nabla^{(1)}_r = 
\nabla^{(2)}_1 +  \dotsm + \nabla^{(2)}_r.
$$
So the toric variety $Y'$ is unchanged (we are writing the prime $'$ here to remind us that we are beginning with a nef-partition in $N_\mathbf{R}$ rather than $M_{\mathbf{R}}$).  
Furthermore, the bundles over $Y'$ are the same and so the total space $X'$ is the same.  
The only difference is in the choice of trivializing sections $(e')^{(i)}_k$. These are 
characters in  $\overline{N}'$ on $X'$, and are related by 
$$
(e')^{(1)}_k = n_k + (e')^{(2)}_k.
$$
The chief significance of this is that the families of Calabi-Yaus $(Z')^{(i)}$ are exactly the same for both $\nabla^{(i)}$'s.  
However, the mirror families $Z^{(i)}$ defined by the Borisov duals $\Delta^{(i)}$ are different.
This is why nef-partitions $\Delta^{(i)}$ whose Borisov duals are related by this kind of translation are called multiple mirror nef-partitions.

{\bf Multiple mirrors.}
Two nef partitions $\Delta^{(1)}_1, \dotsc, \Delta^{(1)}_k$ and $\Delta^{(2)}_1, \dotsc, \Delta^{(2)}_k$ in $M_\mathbf{R}$
are  multiple mirrors if their Borisov duals
$\nabla^{(1)}_1, \dotsc, \nabla^{(1)}_k$ and $\nabla^{(2)}_1, \dotsc, \nabla^{(2)}_k$ in $N_\mathbf{R}$
are related by 
$$
\nabla^{(2)}_k = \nabla^{(1)}_k + n_k
$$ 
for $n_k\in N$ such that 
$$
\sum_k n_k = 0.
$$

\begin{prop}{\bf (multiple mirror geometry)}
If $\Delta^{(1)}_1, \dotsc, \Delta^{(1)}_k$ and $\Delta^{(2)}_1, \dotsc, \Delta^{(2)}_k \subseteq M_\mathbf{R}$
are multiple mirror nef-partitions, then the character groups $\overline{M}$,  the cones of global characters $\kappa$, and the global section characters $\tilde{\Delta}$ are the same.
Thus the space $X_0$ and the function \ $W$ are the same for both nef-partitions, and we are in the situation of 
the ``Geometry'' Subsection \ref{subsection:geometries}.  Furthermore, Assumption 1 
of Subsection \ref{subsection:assumptions} is satisfied, and for any specialization of the coefficient variables $c_{\overline{m}}$ Assumption 2 is satisfied.
\end{prop}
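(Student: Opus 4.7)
The essential content is that the cone $\kappa\subseteq\overline{M}_\mathbf{R}$ of global characters and the polytope $\tilde\Delta$ comprising the data of Subsection \ref{subsection:geometries} coincide for the two multiple mirror nef-partitions; the remaining claims ($X^{(0)}$ and $W$ the same, Assumptions 1 and 2) then follow. My plan is to verify the coincidence of the cones by computing the duals $\kappa^{(i)\vee}$ inside the common cocharacter lattice $\overline{N}$ of the common torus $T$, and checking that they agree as subsets.

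A direct application of the Borisov duality inequality \eqref{equation:nef-inequality} identifies
$$\kappa^{(i)\vee}=\mathbf{R}_{\geq 0}\cdot\operatorname{conv}\!\Bigl(\textstyle\bigcup_k(\nabla^{(i)}_k+e^{*(i)}_k)\Bigr),$$
since a functional $(\nu,(b_k))$ lies in $\kappa^{(i)\vee}$ iff $b_k\geq -\min_{\mu\in\Delta^{(i)}_k}\langle\nu,\mu\rangle$ for each $k$, and the Borisov inequality makes this minimum equal $-\delta(k',k)$ precisely when $\nu\in\nabla^{(i)}_{k'}$. The central combinatorial observation is that after a suitable permutation of the $\mathcal{L}^{(2)}$-labels matching the common indexing of the $\nabla^{(i)}_k$'s, the dual trivializing sections in the common cocharacter lattice satisfy $e^{*(2)}_k=e^{*(1)}_k-n_k$: the shift $-n_k$ in the dual basis exactly compensates the shift $+n_k$ in $\nabla_k$, so
$$\nabla^{(2)}_k+e^{*(2)}_k=(\nabla^{(1)}_k+n_k)+(e^{*(1)}_k-n_k)=\nabla^{(1)}_k+e^{*(1)}_k.$$
Hence the generating polytopes of the two dual cones are literally the same subsets of $\overline{N}_\mathbf{R}$, so $\kappa^{(1)\vee}=\kappa^{(2)\vee}$ and dually $\kappa^{(1)}=\kappa^{(2)}=\kappa$ in the common $\overline{M}$. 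The condition $\sum_k n_k=0$ manifests as $\sum_k e^{*(1)}_k=\sum_k e^{*(2)}_k$, so the height functional that cuts out $\tilde\Delta$ as a slice of $\kappa$ is the same on both sides; hence $\tilde\Delta^{(1)}=\tilde\Delta^{(2)}$ as well, and both $X^{(0)}=\operatorname{Spec}\mathbf{C}[\kappa\cap\overline{M}]$ and $W=\sum_{\overline{m}\in\tilde\Delta\cap\overline{M}}c_{\overline{m}}\chi^{\overline{m}}$ agree.

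For Assumption 1, each $\Delta^{(i)}_k$ contains the origin by the nef-partition definition, so the trivializing section $e^{(i)}_k\in\Lambda^{(i)}_k$; after the permutation used above, $e_k\in\Lambda^{(1)}_k\cap\Lambda^{(2)}_k\neq\varnothing$. For Assumption 2, reflexivity of $\sum_k\Delta^{(i)}_k$ makes it full-dimensional in $M_\mathbf{R}$, so the Minkowski difference $\sum_k(\Delta^{(i)}_k-\Delta^{(i)}_k)=\sum_k\Delta^{(i)}_k-\sum_k\Delta^{(i)}_k$ is also full-dimensional, forcing the subgroup of $M$ generated by $\bigcup_k(\Delta^{(i)}_k-\Delta^{(i)}_k)$ to equal $M$. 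This is exactly the subgroup generated by the $\chi/\chi'$ ratios of Assumption 2 whenever the relevant coefficients are non-zero, which holds over the coefficient torus $C$ and is preserved under any specialization into $C$.

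I expect the main obstacle to be establishing the identity $e^{*(2)}_k=e^{*(1)}_k-n_k$ in the common cocharacter lattice: one must carefully unpack how the two decompositions $\overline{N}=N^{(i)}\oplus\bigoplus_k\mathbf{Z}e^{*(i)}_k$ arise from the two choices of line bundles, and use the $\nabla$-side multiple mirror data to relate the two dual bases. The compatibility $\sum_k n_k=0$ is the global condition that glues the $k$-by-$k$ compensating shifts into a full match of the Gorenstein cones and their height-one slices; the rest of the proposition is then formal.
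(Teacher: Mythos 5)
The paper's own proof is omitted, so I can only compare against the framework it sets up; by that measure your route is the intended one. Your key identity $e^{*(2)}_k=e^{*(1)}_k-n_k$ is precisely the relation $(e')^{(1)}_k=n_k+(e')^{(2)}_k$ that the paper records for the trivializing characters on the common total space $X'$ of the $\nabla$-side geometry, so the ``main obstacle'' you anticipate is already supplied by the setup: the common lattice $\overline{N}$ is the character lattice of $X'$, the two splittings $\overline{N}=N\oplus\bigoplus_k\mathbf{Z}e^{*(i)}_k$ differ by a unipotent change of basis, $\nabla^{(2)}_k+e^{*(2)}_k=\nabla^{(1)}_k+e^{*(1)}_k$ on the nose, and $\sum_k n_k=0$ matches the degree functionals $\langle\cdot,\sum_k e^{*(i)}_k\rangle$ and hence the slices $\tilde\Delta^{(i)}=\kappa\cap\{\deg^{(i)}=1\}$. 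Dually, the induced identification of the two copies of $\overline{M}$ fixes each $e_k$ but is \emph{not} the identity on $M$ (in Example 5.1 it is the substitution $y=w\,t/s$); your dual-side bookkeeping is consistent with this, and it also gives Assumption 1 cleanly, since $e^{(1)}_k=e^{(2)}_k$ and $0\in\Delta^{(i)}_k$ put $\chi^{e_k}$ in $\Lambda^{(1)}_k\cap\Lambda^{(2)}_k$. One caution: your displayed ``iff'' only yields the inclusion $\mathbf{R}_{\geq0}\operatorname{conv}\bigl(\cup_k(\nabla^{(i)}_k+e^{*(i)}_k)\bigr)\subseteq\kappa^{(i)\vee}$; the reverse inclusion is the substantive Batyrev--Borisov duality of reflexive Gorenstein (Cayley) cones and should be invoked as such rather than read off from (\ref{equation:nef-inequality}).

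The genuine gap is in your argument for Assumption 2. Full-dimensionality of $\sum_k(\Delta^{(i)}_k-\Delta^{(i)}_k)$ only shows that the subgroup of $M$ generated by the ratios $\chi/\chi'$ has finite index: a full-dimensional set of lattice vectors can generate a proper finite-index sublattice (the differences of the lattice points of $\operatorname{conv}\{0,e_1,e_2,e_1+e_2+re_3\}$ generate an index-$r$ sublattice of $\mathbf{Z}^3$). Finite index is not enough for the way Assumption 2 is used in Subsection \ref{subsection:assumptions}: a finite-order element of $T_{Y^{(i)}}$ that is trivial on a finite-index sublattice of characters fixes every entry of $F^{(i)}$ without fixing $Y^{(i)}$ pointwise. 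Since $0\in\Delta^{(i)}_k$, what you actually need is that the integral points of $\bigcup_k\Delta^{(i)}_k$ --- equivalently of the reflexive polytope $(\sum_k\nabla_k)^{\vee}$ --- generate $M$ as a group. That ``spanning'' property is strictly stronger than full-dimensionality and does not follow from the argument you give; it needs its own proof (using the nef-partition structure, not just dimension counting) before this part of the proposition is established.
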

\begin{proof}
Omitted.
\end{proof}

{\bf Batyrev-Nill's Example 5.1: nef-partitions.}  The picture here, redrawn from Batyrev-Nill \cite{Batyrev-Nill}, illustrates the multiple mirror nef-partitions of their Example 5.1:
\begin{center}
\begin{tabular}{ccc}
&  & \\
\includegraphics{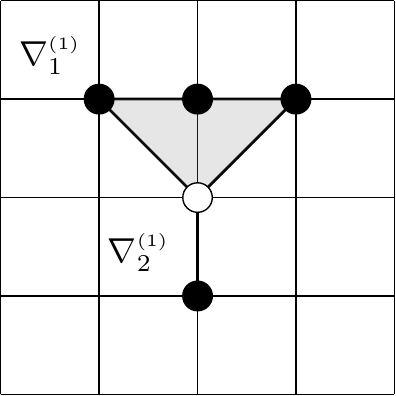} & \qquad & 
\includegraphics{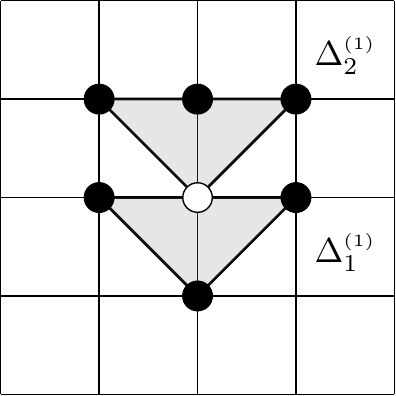} \\
& & \\
\includegraphics{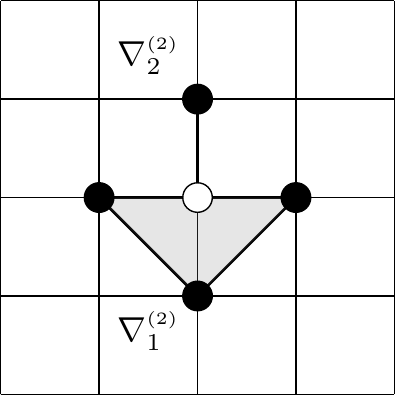} & \qquad & 
\includegraphics{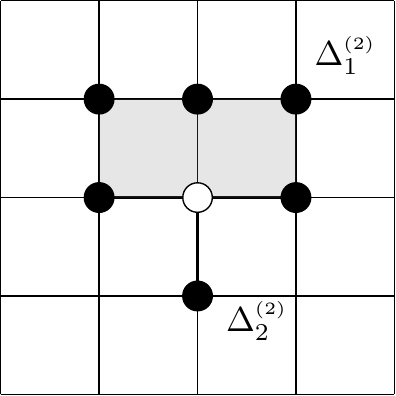} \ .\\
\end{tabular}
\end{center}
Here the vectors translating the $\nabla$'s are $n_1 = (0,-1)$ and $n_2 = (0,1).$
Notice that the integral points in the convex hull of the $\Delta$ nef-partitions were used as the index set for the coefficients in $W$.  
Using these points, rather than those of $\tilde{\Delta} \cap \overline{M}$, explains  why the ``origin'' coefficients are indexed differently.  We made this choice so that our indexing vectors would have two entries rather than four.

\subsection{$D$-graphs of multiple mirror nef-partitions.}
\label{subsection:d-graph}

The matrices $(W)$ obtained from multiple mirror nef-partitions have a specific form. 
Later, this will allow us to conclude the non-emptiness of the sets $\Omega$. 
What we need to know about $(W)$ can be read off of the graph $D$.  
For multiple mirror nef-partitions, each of the connected components $D_{(j)}$ is strongly connected.
This structure is a consequence of the fact the elements $n_k$ giving the multiple mirror data can be used to detect arrows in $D$.  
We also use a trick we call ``coarsening''  in our argument.  Later, the strong connectivity of the components of $D$ will allow us to apply the Perron-Frobenius theorem to produce a point in the set $O^{(1)}$. This guarantees the non-emptiness of the   $\Omega$-sets.

\begin{prop}{\bf ($W_{ab}$'s and $n_k$'s)} 
The characters appearing in the terms of $W_{kk}$ correspond to the integral points of $\Delta^{(1)}_k \cap \Delta^{(2)}_k$, and when $a \neq b$ the characters appearing in in terms $W_{ab}$ correspond to the non-zero integral points of $\Delta^{(1)}_a \cap \Delta^{(2)}_b$.  Furthermore when $a \neq b$, a non-zero integral point $m$ lies in the intersection $\Delta^{(1)}_a \cap \Delta^{(2)}_b$ if and only if 
$$-\langle m, n_b \rangle = \langle m, n_a \rangle = 1.$$
\end{prop}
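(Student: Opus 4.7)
The proposition has two parts: (i) identifying the characters in each block $W_{ab}$ with integer points of $\Delta^{(1)}_a \cap \Delta^{(2)}_b$ (with origin included exactly when $a=b$), and (ii) characterizing the nonzero points of $\Delta^{(1)}_a \cap \Delta^{(2)}_b$ for $a \neq b$ via their pairings with $n_a$ and $n_b$. Part (i) is essentially a bookkeeping argument using the ``nef geometry'' proposition, while Part (ii) is the substantive content and exploits two distinguished lattice points of $\nabla^{(1)}_{k'}$ to extract the required inequalities from Borisov duality.

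For Part (i), the ``nef geometry'' proposition gives a bijection between the characters of $W^{(i)}_k$ and integer points of $\Delta^{(i)}_k$, via the trivializing character $e^{(i)}_k$. Refining simultaneously by $a$ and $b$, the characters of $W_{ab}$ correspond to $\Delta^{(1)}_a \cap \Delta^{(2)}_b \cap M$. The origin of each $\Delta^{(i)}_k$ corresponds to the trivializing character itself; for $a=b$ the trivializations align and the origin-term appears in $W_{kk}$, while for $a \neq b$ the characters $e^{(1)}_a$ and $e^{(2)}_b$ are distinct, so no origin-term appears in $\Lambda^{(1)}_a \cap \Lambda^{(2)}_b$.

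For Part (ii), the key observation is that $\nabla^{(1)}_{k'}$ contains two canonical lattice points: the origin $0$ (since every polytope in a nef-partition contains the origin) and $-n_{k'}$ (by the multiple mirror condition $-n_k \in \nabla^{(1)}_k$). Substituting $p=0$ and $p=-n_{k'}$ into the defining Borisov inequality $\langle p, m \rangle \geq -\delta(k', a)$ for $m \in \Delta^{(1)}_a$, and similarly into the shifted version for $\Delta^{(2)}_b$ arising from $\nabla^{(2)}_{k'} = \nabla^{(1)}_{k'} + n_{k'}$, yields
$$
\langle n_a, m \rangle \in [0,1], \quad \langle n_b, m \rangle \in [-1,0], \quad \langle n_{k'}, m \rangle = 0 \ \text{ for } k' \neq a,b.
$$
Integrality of the pairings together with the constraint $\sum_k n_k = 0$ then leaves only the two possibilities $(\langle n_a, m \rangle, \langle n_b, m \rangle) \in \{(0,0), (1,-1)\}$. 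The $(1,-1)$ branch is exactly the forward implication of the proposition. The $(0,0)$ branch is ruled out when $m \neq 0$: if every pairing $\langle n_{k'}, m \rangle$ vanishes then the shift by $n_{k'}$ does not affect the Borisov inequalities evaluated on $m$, so $m \in \Delta^{(2)}_b$ forces $m \in \Delta^{(1)}_b$; combined with $m \in \Delta^{(1)}_a$ and the standard fact $\Delta^{(1)}_a \cap \Delta^{(1)}_b = \{0\}$ for $a \neq b$ in a nef-partition, this forces $m = 0$, a contradiction. The converse follows because, once $m$ is a nonzero integer point in some $\Delta^{(1)}_{a_0} \cap \Delta^{(2)}_{b_0}$ with $a_0 \neq b_0$, the indices $a_0, b_0$ are uniquely pinned down among all nef-partition labels by the location of the two nonzero pairings.

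The hard part is the $(0,0)$ exclusion, which leans on the standard nef-partition property $\Delta^{(1)}_a \cap \Delta^{(1)}_b = \{0\}$ for distinct $a,b$. This property, while classical, is not explicitly recorded in the summary of nef-partitions given in the paper, and should be cited from Borisov \cite{Borisov} or Batyrev-Nill \cite{Batyrev-Nill}. A secondary subtlety in Part (i) is reconciling the trivializations $e^{(1)}_k$ and $e^{(2)}_k$ so that the bijection with $\Delta^{(1)}_a \cap \Delta^{(2)}_b$ is well-defined across the two nef-partitions; this is ensured by the coincidence of the common polytope $\tilde{\Delta}$ in the multiple mirror setup, as illustrated by the shared $c_s, c_t$ origin terms in Batyrev-Nill's Example 5.1.
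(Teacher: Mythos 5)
Your proof is correct, and while it rests on the same two pillars as the paper's argument --- the Borisov duality inequalities (\ref{equation:nef-inequality}) evaluated at the distinguished points $0$ and $\pm n_{k'}$ of the $\nabla^{(i)}_{k'}$'s, together with the constraint $\sum_k n_k = 0$ --- it enters from the opposite end. The paper argues existence-first: it starts from the fact that the non-zero lattice point $-n_a$ of the reflexive polytope $\sum_k \nabla^{(1)}_k$ lies on its boundary, hence pairs to $-1$ with some lattice point $m$ of the polar dual $\operatorname{conv}(\cup_k \Delta^{(i)}_k)$, and then uses the duality inequalities and $\sum_k n_k = 0$ to locate $m$ in $\Delta^{(1)}_a \cap \Delta^{(2)}_b$ for a unique $b$. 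You instead start from an arbitrary non-zero $m$ in the intersection, pin all pairings $\langle m, n_{k'}\rangle$ into $\{0,1\}$, $\{-1,0\}$, or $\{0\}$, and exclude the all-zero branch via the classical fact $\Delta^{(1)}_a \cap \Delta^{(1)}_b = \{0\}$ for $a \neq b$. Your route proves the ``only if'' direction as literally stated (which the paper's existence-oriented proof leaves somewhat implicit), at the cost of importing that classical intersection fact, which you rightly flag as needing a citation; the paper's route instead imports the boundary property of reflexive polytopes, and delivers exactly the existence statement that is consumed downstream in the strong-connectedness proposition. Two small points to tighten: your converse direction tacitly assumes $m$ lies in some $\Delta^{(1)}_{a_0} \cap \Delta^{(2)}_{b_0}$, which requires the covering fact (recorded in the paper's proof) that every lattice point of $\operatorname{conv}(\cup_k \Delta^{(i)}_k)$ belongs to some $\Delta^{(i)}_k$, plus a word on why $a_0 = b_0$ is impossible when some pairing is non-zero; and the hypothesis on $m$ in the ``if'' direction should be read as ``non-zero integral point of the common convex hull,'' since an arbitrary $m \in M$ with the right pairings need not lie in any of the polytopes.
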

\begin{proof}
The first statements  can be found  in the references above on nef-partitions, and follow from an elementary analysis of the global section polytope $\tilde{\Delta} \subseteq \kappa.$

For the claims about using the  $n_k$'s to detecting non-zero elements in
the sets $\Delta^{(1)}_a \cap \Delta^{(2)}_b$, begin with the fact that every  non-zero, integral element of a reflexive polytope is an element of the boundary of the polytope.  
So for every non-zero, integral element $n$ in a reflexive polytope, there is a non-zero integral element $m$ in the polar dual such that 
$$
\langle m, n \rangle = -1.
$$
In the case of nef-partitions, the polar dual of the sum $\sum_k \nabla^{(i)}_k$ is the convex hull of the union $\cup_k \Delta^{(i)}_k$, and any integral point in this convex hull is a member of some $\Delta^{(i)}_k$.  In this multiple mirror situation, the sum and convex hull polytopes are independent of $i$.

The membership $-n_a \in \nabla^{(1)}_a$ guarantees that if $n_a \neq 0$, then there is an integral element $m \in \cup_k \Delta^{(i)}_k$ such that $\langle m, n_a \rangle = 1.
$
Furthermore, the inequalities of Borisov duality (labelled (\ref{equation:nef-inequality}) above) guarantee that $m \in \Delta^{(1)}_a$ and for any $b \neq a$ we have 
$\langle m, -n_b \rangle \geq 0.$  The vanishing $\sum_k n_k = 0$ means that there is exactly one $b$ for which 
$\langle m, -n_b \rangle \geq 0,$ and all the rest are zero.  Finally, reversing the rolls of the $\Delta^{(1)}$'s and  $\Delta^{(2)}$'s 
means replacing the $n$'s with their negatives; so we can conclude that $m \in \Delta^{(1)}_a \cap \Delta^{(2)}_b$.
\end{proof}

\begin{prop} {\bf (coarsening the partition)}
\label{prop:coarsening}
If $\Delta_1, \dotsc, \Delta_r$ is a nef-partition and we have an equivalence relation on $\{ 1, \dotsc, r \}$ with equivalence classes $[a]$,
then setting
$$
\Delta_{[a]} = \sum_{a' \in [a]} \Delta_{a'}
$$
is again a nef partition indexed by the equivalence classes $[a]$.  
Furthermore, if $n_1, \dotsc, n_r$ is multiple mirror data for the $\Delta_a$'s, then 
$$
n_{[a]} = \sum_{a' \in [a]} n_{a'}
$$
is multiple mirror data for the $\Delta_{[a]}$'s.  Finally,
the $D$ graph corresponding to the $\Delta_{[a]}$'s and $n_{[a]}$'s is the quotient of the graph corresponding to the 
the $\Delta_a$'s and $n_a$'s.
\end{prop}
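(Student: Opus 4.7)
The plan is to verify the three assertions in turn. The nef-partition claim follows directly from the definitions. Each $\Delta_{[a]}$ is a Minkowski sum of non-zero, integral, compact, convex polytopes that contain the origin, so it inherits all these properties; it is non-zero because $\Delta_{a'} \subseteq \Delta_{[a]}$ for any $a' \in [a]$, the other summands contributing a translate of the origin. The total Minkowski sum is unchanged, $\sum_{[a]} \Delta_{[a]} = \sum_{a} \Delta_{a}$, which is reflexive by hypothesis.

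For the multiple-mirror-data claim, the sum-to-zero condition is immediate: $\sum_{[a]} n_{[a]} = \sum_{a} n_{a} = 0$. The substantive step is the containment $-n_{[a]} \in \nabla_{[a]}$, where $\nabla_{[a]}$ is the dual polytope corresponding to the coarsened partition. My approach is to identify $\nabla_{[a]}$ with the Minkowski sum $\sum_{a' \in [a]} \nabla_{a'}$, which is the coarsening of the dual side compatible with the multiple-mirror translation: summing the individual relations $\nabla^{(2)}_{a'} = \nabla^{(1)}_{a'} + n_{a'}$ over $a' \in [a]$ yields $\nabla^{(2)}_{[a]} = \nabla^{(1)}_{[a]} + n_{[a]}$ termwise. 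Once this identification is in place, $-n_{[a]} = \sum_{a' \in [a]} (-n_{a'})$ lies in $\nabla_{[a]}$ simply because each summand $-n_{a'}$ lies in $\nabla_{a'}$.

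For the $D$-graph claim, I unpack the definition of the coarsened partition to see that $W_{[a][b]} = \sum_{a' \in [a],\, b' \in [b]} W_{a'b'}$. Because the refinement $\Xi = \coprod_{a,b} (\Lambda^{(1)}_{a} \cap \Lambda^{(2)}_{b})$ places the individual $W_{a'b'}$'s into pairwise disjoint character subsets, no cancellation can occur in this sum, and hence $W_{[a][b]} \not\equiv 0$ if and only if some $W_{a'b'} \not\equiv 0$. This is exactly the condition defining the quotient graph.

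The main obstacle is the middle step: showing $-n_{[a]} \in \nabla_{[a]}$. A direct attempt via the Borisov-dual inequalities $\langle m, n \rangle \geq -\delta([a'], [a])$ for $m \in \Delta_{[a']}$ fails in the diagonal case $[a'] = [a]$, where expanding $m = \sum_{b \in [a]} m_{b}$ and $-n_{[a]} = \sum_{c \in [a]} (-n_{c})$ gives only $\sum_{b, c \in [a]} \langle m_{b}, -n_{c} \rangle \geq -|[a]|$, short by a factor of $|[a]|$ from what is needed. The argument must therefore commit throughout to the Minkowski-sum description of $\nabla_{[a]}$, so that the coarsenings on both sides of the duality transform linearly with the multiple-mirror data.
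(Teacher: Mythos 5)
The paper's own proof is omitted, so I can only judge your argument on its own terms, and the middle step has a genuine gap. You hinge everything on ``identifying'' the Borisov dual of the coarsened partition with the Minkowski sum $\sum_{a' \in [a]} \nabla_{a'}$, but that identification is false in general: the dual of a Minkowski-sum coarsening is the \emph{convex-hull-of-union} coarsening of the dual side, not the Minkowski sum. The full coarsening already shows this: the Borisov dual of the one-element nef-partition $\{\sum_k \Delta_k\}$ is $\{(\sum_k \Delta_k)^\vee\} = \{\operatorname{conv}(\bigcup_j \nabla_j)\}$, which is strictly contained in $\sum_j \nabla_j$ in general (for $\nabla_1 = [-1,1]\times\{0\}$ and $\nabla_2 = \{0\}\times[-1,1]$ in $\mathbf{R}^2$ the dual of the coarsening is the diamond $\operatorname{conv}(\pm e_1, \pm e_2)$, while the Minkowski sum is the square). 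The $-\lvert [a]\rvert$ shortfall you find in the diagonal case is a symptom of exactly this discrepancy, and ``committing throughout to the Minkowski-sum description'' does not repair it --- it silently replaces the object the statement is about with a larger one, so the containment you then verify is not the containment you need.

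The escape is that the paper's characterization of multiple mirror data never mentions the dual: $n_1, \dotsc, n_r$ is multiple mirror data for a nef-partition $P_1, \dotsc, P_r$ precisely when $-n_k \in P_k$ for all $k$ and $\sum_k n_k = 0$, since these two conditions are exactly what make the translates $P_k + n_k$ contain the origin and leave the (reflexive) total Minkowski sum unchanged. This is also how the proposition is invoked later ($\nabla^{(i)}_{[k]} = \sum_{k' \in [k]} \nabla^{(i)}_{k'}$ with $\nabla^{(2)}_{[k]} = \nabla^{(1)}_{[k]} + n_{[k]}$). Under that reading your middle step collapses to the one line you already wrote, $-n_{[a]} = \sum_{a' \in [a]}(-n_{a'}) \in \sum_{a' \in [a]} \Delta_{a'} = \Delta_{[a]}$, with no duality needed. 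Your third step has a related gap: writing $W_{[a][b]} = \sum_{a' \in [a],\, b' \in [b]} W_{a'b'}$ presupposes that the character set $\Lambda_{[a]}$ of the coarsened geometry equals $\bigcup_{a' \in [a]} \Lambda_{a'}$, which is a statement about the lattice points of the Borisov duals of the coarsened partition --- i.e.\ exactly the content to be proved. The robust route is the $n$-detection lemma (the ``$W_{ab}$'s and $n_k$'s'' proposition): a non-zero lattice point $m$ gives an arrow $a' \to b'$ iff $\langle m, n_{a'}\rangle = 1 = -\langle m, n_{b'}\rangle$ with all other pairings zero, so from $\langle m, n_{[a]}\rangle = \sum_{a' \in [a]} \langle m, n_{a'}\rangle$ one reads off that $m$ gives an arrow $[a] \to [b]$ in the coarsened graph exactly when it gives some arrow $a' \to b'$ with $[a] \neq [b]$, which is the quotient graph.
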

\begin{proof}
Omitted.
\end{proof}

{\bf Strongly connected graphs.}  A directed graph is called strongly connected if there is a path in each direction between any two vertices.

\begin{prop}{\bf (strong connectedness)}
The connected components of the graphs $D$ which arise from multiple mirror nef-partitions are strongly connected, and each node has a loop.
\end{prop}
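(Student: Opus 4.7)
The plan is to handle the loops and the strong connectedness of components separately, both leveraging the detection criterion from the preceding proposition together with the coarsening Proposition \ref{prop:coarsening}. The loop at each vertex $k$ is immediate: every nef-partition polytope contains the origin, so $0 \in \Delta^{(1)}_k \cap \Delta^{(2)}_k$, and the preceding proposition identifies the characters of $W_{kk}$ with the integral points of this intersection. Thus $W_{kk} \not\equiv 0$, producing a loop at $k$.

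For strong connectedness I would argue by contradiction using coarsening. Suppose some weakly connected component $D_{(j)}$ is not strongly connected. Then its condensation is a nontrivial DAG, which possesses a sink strongly connected component $C \subsetneq V_j$ with no arrows leaving $C$ in $D$. I would then apply Proposition \ref{prop:coarsening} to the equivalence relation whose classes are $C$ together with the singletons $\{a\}$ for every $a \notin C$. The result is a new multiple mirror nef-partition whose $D$-graph $D'$ is the quotient of $D$, and in $D'$ the vertex $[C]$ has no outgoing arrows other than its own loop.

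Now the detection criterion applied to the coarsened nef-partition does the work. If $n_{[C]} = \sum_{a \in C} n_a$ were nonzero, then $-n_{[C]}$ would be a non-zero integral boundary point of the reflexive polytope $\sum_{[k]} \nabla^{(1)}_{[k]}$, and the argument of the preceding proposition would produce an integral $m$ and a unique $[k'] \neq [C]$ satisfying $\langle m, n_{[C]} \rangle = 1$ and $\langle m, n_{[k']} \rangle = -1$; this is an outgoing arrow from $[C]$, contradicting the sink property. Hence $n_{[C]} = 0$. But incoming arrows to $[C]$ are then also precluded, since each would require an $m$ with $\langle m, n_{[C]} \rangle = -1$, impossible when $n_{[C]} = 0$. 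So $[C]$ is isolated in $D'$, contradicting the fact that the image of $D_{(j)}$ in $D'$ is still weakly connected (quotients inherit weak connectivity) and contains vertices outside $[C]$ coming from $V_j \setminus C$. The main obstacle is precisely recognizing this symmetric use of the criterion: once one sees that $n_{[C]}$ must vanish in order to avoid outgoing arrows but that the same vanishing also forbids incoming arrows, the contradiction drops out, and everything else is bookkeeping with the coarsening.
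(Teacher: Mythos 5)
Your proof is correct and follows essentially the same route as the paper's: loops come from the origin lying in every $\Delta^{(1)}_k \cap \Delta^{(2)}_k$, and strong connectedness is forced by coarsening along the condensation and applying the $n_k$-detection criterion to show a sink component cannot exist. If anything, your version is slightly more careful, since you explicitly dispose of the case $n_{[C]}=0$ by showing it would isolate $[C]$ and contradict weak connectivity, a point the paper's proof leaves implicit.
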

\begin{proof}
We know the nodes of $D$ are looped since $\chi^{e_k}$ appears in the expansion of $W_{kk}$.
Otherwise there is an arrow from the node $a$ to a different node if and only if $\Delta^{(1)}_a \cap \Delta^{(2)}_b$ is non-zero for some $b$.  This is equivalent to saying that there is some element $m \in \Delta^{(1)}_a$ such that
$$
 \langle m, n_a \rangle = 1.
$$
This always happens because $-n_a$ is inward normal to  a face of $\Delta^{(1)}_a$.
 So every node has a non-loop arrow exiting it.  
 
In other words, every row of $(W_{(j)})$ has at least two non-zero entries: the diagonal and an ``exiting'' arrow.  
Reversing the rolls of the $\Delta^{(1)}$'s and the  $\Delta^{(2)}$'s leads to the same statement for the columns, i.e. every node has a non-loop arrow entering it.

Now assume $D_{(j)}$ is not strongly connected.  Consider the ``condensation'' graph: this is the quotient graph $D/\sim$ formed by identifying vertices in the same strongly connected component.
To be clear,   $D/\sim$ has nodes equal to the equivalence classes
$$
[k] = \{ k' | k' \text{ is in the same strongly connected component as } k \}, 
$$
 and there is an edge from $[a]$ to $[b]$ if and only if there  are elements $a' \in [a]$ and $b' \in [b]$ with and edge from $a'$ to $b'$ in $D$.  The condensation graph is acyclic, apart from the loops, and it is the graph that arises from the multiple mirror nef-partitions given by the coarsening:
\begin{itemize}
\item the nef-partition $\nabla^{(i)}_{[k]} = \sum_{k' \in [k]} \nabla^{(i)}_{k'}$, and
\item the translations $n_{[a]} = \sum_{k' \in [k]} n_{k'}$.
\end{itemize}
However, the only way the graph of nef-partition may be acyclic in this way is if its connected components
each have a single vertex, since otherwise it would have a node with no non-loop exiting arrow.  This means
$D_{(j)}/\sim$ is a single vertex, so $D_{(j)}$ is strongly connected and the proof is complete.
\end{proof}

\subsection{Multiple mirror theorem.}

Our application to the birationality of Calabi-Yau families arising from multiple mirror nef-partitions proceeds in two steps.
We first establish that the $\Omega$ sets are non-empty by explicitly producing a point in $O^{(1)}.$ 
This point exists by virtue of the fact that the connected components of  the $D$-graph are strongly connected, so we can make an argument via the Perron-Frobenius theorem.
The second step is essentially the observation that the vanishing locus $Z^{(i)}$ in $Y^{(i)}$ of the universal section $g$ is irreducible.

{\bf Perron-Frobenius theorem.}  
The Perron-Frobenius theorem applies to square matrices $A$ with two properties:  
\begin{itemize}
\item the entries of $A$ are non-negative, and
\item the graph formed by drawing an arrow $k \to k'$ if and only if $a_{kk'} \neq 0$ is strongly connected. 
\end{itemize}
Recall, a directed graph is called strongly connected if there is a path in each direction between any two vertices.

Given these non-negativity and strong connectivity conditions on a matrix $A$, the consequence of Perron-Frobenius theorem that is relevant to us is 
\begin{itemize}
\item $A$ has a simple real eigenvalue $r > 0$ such that the column eigenvector $v$ and row eigenvector $h$ both have all positive entries. 
\end{itemize}
More details can be found in the original works of Perron and Frobenius \cite{perron, frobenius}, and a modern account is in Meyer \cite{meyer}.

\begin{prop}{\bf (nef $\Omega$ non-emptiness)}
\label{proposition:omega-non-empty}
In the case of multiple mirror nef-partitions, the $\Omega$ sets are non-empty.
\end{prop}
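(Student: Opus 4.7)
My plan is to construct an explicit point of $R^{(1)} \cap R^{(2)}$ one block at a time. For each connected component $D_{(j)}$, I aim to exhibit a specialization of the coefficients $c_\chi$ together with a point $y_0 \in T_{Y^{(1)}}$ whose torus coordinates are all positive reals, such that the block $L := F^{(1)}_{(j)}(y_0)$ has non-positive off-diagonal entries, positive diagonal entries, and all row sums equal to zero. By the previous proposition the zero/non-zero pattern of $L$ is precisely $D_{(j)}$ with a loop at every vertex; in particular the underlying directed graph is strongly connected, which is what lets Perron--Frobenius go.

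To arrange the sign pattern I would specialize every coefficient $c_\chi$ with $\chi \in \Lambda^{(1)}_a \cap \Lambda^{(2)}_b$, $a\ne b$, to a negative real number; then at any positive real torus point each off-diagonal entry of $L$ is a nonzero negative sum of positive monomials, hence negative. For each $k$ I then tune one of the diagonal coefficients --- the set partition $\Xi = \coprod_{a,b} \Lambda^{(1)}_a \cap \Lambda^{(2)}_b$ is disjoint, so each coefficient contributes to exactly one entry of $(W)$, and Assumption 1 guarantees a diagonal coefficient to tune --- to force the $k$-th row sum of $L$ to vanish; the value of this coefficient required is positive and large enough to make the diagonal entry of $L$ positive.

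Having arranged this, I would pick a real $c > \max_k L_{kk}$ and set $A := cI - L$. Then $A$ is entrywise non-negative with the same (strongly connected, with loops) arrow pattern as $D_{(j)}$, and $A(1,\ldots,1)^T = c(1,\ldots,1)^T$. Perron--Frobenius forces $c$ to be the simple Perron eigenvalue of $A$ and produces a strictly positive left eigenvector $h$ with $hA = ch$, that is, $hL = 0$. Simplicity of the Perron eigenvalue makes both the left and the right kernel of $L$ one-dimensional, so $L$ has rank $d_j - 1$.

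Assembling the $\beta$ blocks yields a point $y_0 \in T_{Y^{(1)}}$ with $\operatorname{rank} F^{(1)}(y_0) = r - \beta$ and a strictly positive left null vector $h$, so the image of $y_0$ in $Z^{(1)}$ lies in $\Omega^{(1)}$; the lift obtained by setting $\chi_{kk} = h_k$ then sits inside $O^{(1)} = R^{(1)} \cap R^{(2)}$, which shows $\Omega^{(2)}$ is non-empty as well. The main obstacle is the second step, where one must juggle the interplay between the diagonal and off-diagonal coefficients so as to enforce both the sign pattern and the vanishing of the row sums simultaneously; this is also where the strong connectivity of each $D_{(j)}$ really earns its keep, since without it Perron--Frobenius could not deliver a strictly positive left null vector.
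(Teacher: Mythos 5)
Your argument is correct and follows essentially the same route as the paper: strong connectivity of each block $D_{(j)}$ plus the Perron--Frobenius theorem, applied block by block after specializing coefficients and the torus point, to produce an explicit point where $(W)$ has rank $r-\beta$ and admits left and right null vectors with all entries non-zero. The only differences are cosmetic --- the paper evaluates every character at $1\in T_-$, sets the off-diagonal entries to $1$ and the diagonal entries to $-r_j$ with $r_j$ the Perron root of the loopless adjacency block, and then rescales the fiber coordinates to normalize the column null vector, whereas you build $F^{(1)}\mathbf{1}=0$ directly with sign-mixed coefficients and invoke the simplicity part of Perron--Frobenius on $cI-L$ to certify the rank and the positivity of the left null vector.
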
 
\begin{proof}
The torus $T$ is the product of a torus $T_-$ whose character group is $\overline{M}$ and the coefficient torus $C$.
We will show that we can choose a point $t \in T$ that lies in $O^{(1)}$.  
This will guarantee $\Omega^{(1)}$, and thus $\Omega^{(2)}$ are non-empty.

For a point  $t \in T$ to lie in $O^{(1)}$ we must have
$$
(W) \begin{pmatrix} 1 \\ \vdots \\ 1 \end{pmatrix} = 0,
$$
$$
 h(W) = 0 \text{ for $h$ with all entries $\neq 0$,}
$$
and the rank of $(W)$ must be $k-\beta$.

To begin, for each non-zero entry $W_{ab}$ of $(W)$ substitute $c_\chi = w_{ab}/ \#(\Lambda^{(1)}_a \cap \Lambda^{(2)}_b)$ for an  indeterminate 
$w_{ab}$.  
Now specializing the characters to $1 \in  T_{-}$ renders $(W)$ so that variables $w_{ab}$ are sitting
 in its non-zero entries.  
We now we need only find values for the  $w_{ab}$'s that give the matrix the needed properties.  
At first we can set $w_{ab} =1$ for $a \neq b$.
Then $(W)$ has the form 
$$
(W) = \operatorname{diag}(w_{kk}) + A.
$$
The graph obtained from $A$ equals the graph $D$ with the loops removed.
In particular, $A$ has a block diagonal form where the blocks have strongly connected graphs.
This means that we can assign the value $-r_j$ the $w_{kk}$'s in the $j^\text{th}$ block, where $r_j > 0$ is the eigenvalue from the Perron-Frobenius theorem for the $j^\text{th}$ block of $A$.
All together, this means 
$$
(W) = \operatorname{diag}(-r_j) + A
$$
has rank $k-\beta$ and annihilates a column vector $v$ and a row vector $h$ all of whose entries are positive---in particular, non-zero.

So far, we have specialized to some $(1,c) \in T_- \times C$. Now we adjust this point  so that it lies in $O^{(1)}$.  To do this, use the second factorization
$$
(W) =  F^{(2)} \operatorname{diag}(x)
$$
and scale the  $x$'s so that they equal to the entries of $v$.  This way  
$$
(W) \begin{pmatrix} 1 \\ \vdots \\ 1 \end{pmatrix} = 0,
$$
$$
 h(W) = 0 \text{ for $h$ with all entries $\neq 0$,}
$$
and the rank of $(W)$ is $k-\beta$.  
We now lie in $O^{(1)}$, as needed.
\end{proof}

\begin{thm}{\bf (birationality of nef multiple mirror families)}
\label{theorem:birat-nef}
In the situation of multiple mirror nef-partitions, the vanishing locus $Z^{(i)}$ of the universal section $g^{(i)}$ is irreducible.  So the set $\Omega^{(i)}$ is dense.
Consequently, there is an open dense subset of the space of coefficients for which $(Z^{(1)})_c$ and $(Z^{(2)})_c$ are birational when $c$ is in this set.
\end{thm}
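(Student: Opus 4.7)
The plan is to establish the theorem in three stages: first prove irreducibility of the universal space $Z^{(i)}$, then deduce density of $\Omega^{(i)}$ in $Z^{(i)}$, and finally descend the birationality from the universal family to a generic specialization of the coefficient variables $c_{\overline{m}}$.

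For the irreducibility of $Z^{(i)}$ I would use the standard universal incidence argument (this is the argument attributed to R.\ Donagi in the acknowledgements). Let $V_i = \Gamma(Y^{(i)}_-, \mathcal{V}^{(i)})$ and form
\[
\mathcal{I}^{(i)} \;=\; \{\,(y,g) \in Y^{(i)}_- \times V_i \;:\; g(y) = 0 \,\}.
\]
Because each line bundle in the nef-partition is nef on a projective toric variety, it is globally generated, so the evaluation $\mathrm{ev}_y \colon V_i \to \mathcal{V}^{(i)}_y$ is surjective for every $y$. Consequently the first projection $\mathcal{I}^{(i)} \to Y^{(i)}_-$ is a vector bundle of rank $\dim V_i - r$. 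Since $Y^{(i)}_-$ is an irreducible projective toric variety, $\mathcal{I}^{(i)}$ is irreducible. The universal complete intersection $Z^{(i)}$ sits inside $\mathcal{I}^{(i)}$ as the open locus where every coefficient $c_{\overline{m}}$ is non-zero, i.e.\ $Z^{(i)} = \mathcal{I}^{(i)} \cap (Y^{(i)}_- \times C)$; this open subvariety is non-empty (the kernel of the generic linear map $\mathrm{ev}_y$ is not contained in any coordinate hyperplane of $V_i$), hence irreducible.

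Density of $\Omega^{(i)}$ in $Z^{(i)}$ is then automatic: the set $\Omega^{(i)}$ is open in $Z^{(i)}$ by the discussion in Subsection~\ref{subsection:geography}, and it is non-empty by Proposition~\ref{proposition:omega-non-empty}, so it is dense in the irreducible variety $Z^{(i)}$. For the descent to fibers, observe that all of the data in Theorem~\ref{theorem:omega-iso} live over the coefficient torus $C$: the identification $\Omega^{(1)} \cong \Omega^{(2)}$ commutes with the two projections to $C$. Since $\Omega^{(i)}$ is dense in $Z^{(i)}$ and $Z^{(i)} \to C$ is dominant, the restriction $\Omega^{(i)} \to C$ is dominant, so there is a dense open $U \subseteq C$ over which both $(\Omega^{(1)})_c$ and $(\Omega^{(2)})_c$ are non-empty. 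Shrinking $U$ further so that the fibers $(Z^{(i)})_c$ are irreducible, the non-empty open subset $(\Omega^{(i)})_c$ becomes dense in $(Z^{(i)})_c$, and restricting the isomorphism $\Omega^{(1)} \cong \Omega^{(2)}$ to these fibers yields the birational identification $(Z^{(1)})_c \dashrightarrow (Z^{(2)})_c$.

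The main obstacle I anticipate is verifying that the generic fiber of $Z^{(i)} \to C$ is irreducible, since this is the one point in the descent that is not formal. Irreducibility of $Z^{(i)}$ itself only guarantees generic connectedness of $Z^{(i)} \to C$ up to a possible finite étale cover arising from the Stein factorization, and one must rule this out. My expectation is that this is handled by combining a Bertini-type argument (generic smoothness of complete intersections of globally generated bundles on a smooth toric variety, or on its resolution) with a Lefschetz-type connectedness theorem for complete intersections of nef divisors on a projective variety. In the Calabi-Yau range where Batyrev--Nill's multiple mirror construction lives, smoothness plus connectedness forces irreducibility, which completes the argument.
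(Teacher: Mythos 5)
Your first two stages coincide with the paper's argument. The incidence construction $\mathcal{I}^{(i)} \to Y^{(i)}_-$, a vector bundle because nef line bundles on a projective toric variety are globally generated, is exactly the paper's observation that $Z^{(i)}$ fibers over $Y^{(i)}_-$ with irreducible fibers of constant dimension (the paper phrases this as each point of $Y^{(i)}_-$ cutting out a constant-rank linear condition, hence a ``subtorus,'' in the coefficient torus $C$); and density of $\Omega^{(i)}$ is, as you say, openness plus Proposition \ref{proposition:omega-non-empty} plus irreducibility of the ambient $Z^{(i)}$.

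The divergence, and the one genuine gap, is in your descent to fibers. You make the conclusion hinge on generic irreducibility of the fibers $(Z^{(i)})_c$, leave that unproven, and propose to obtain it from Bertini together with a Lefschetz-type connectedness theorem. That route is doubtful here: the line bundles are only nef, not ample, and a complete intersection of merely nef, globally generated divisors can be disconnected (already a generic section of $\mathcal{O}(2,0)$ on $\mathbf{P}^1\times\mathbf{P}^1$ vanishes on two disjoint lines), so Lefschetz connectedness does not apply as stated and you would need a genuinely different input to close this. More to the point, the step is unnecessary, and the paper does not take it. All that is needed is that $(\Omega^{(i)})_c$ is dense in $(Z^{(i)})_c$ for $c$ in a dense open subset of $C$, and this follows from a dimension count with no irreducibility of fibers: by upper semicontinuity of fiber dimension for $Z^{(i)} \to C$, over a dense open subset of $C$ every irreducible component of $(Z^{(i)})_c$ has dimension exactly $\dim Z^{(i)} - \dim C$, while the closed complement $Z^{(i)}\setminus\Omega^{(i)}$ has dimension at most $\dim Z^{(i)} - 1$, so over a further dense open subset of $C$ its fibers have strictly smaller dimension and cannot contain any component of $(Z^{(i)})_c$. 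Restricting the isomorphism $\Omega^{(1)}\cong\Omega^{(2)}$ of Theorem \ref{theorem:omega-iso} (which, as you correctly note, commutes with the projections to $C$) to such a fiber then identifies dense opens of $(Z^{(1)})_c$ and $(Z^{(2)})_c$, which is the birationality asserted. If you also want irreducibility of the generic fiber, that is a separate claim requiring a separate proof; it is not delivered by the tools you cite and is not what the theorem's irreducibility clause refers to (that clause concerns the universal $Z^{(i)}$ over $Y^{(i)}_-\times C$).
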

\begin{proof}
The main observation here is that the universal complete intersection $Z^{(i)}$ in $Y_-^{(i)} \times C$ is irreducible.  
This can be seen by considering the fiber $(Z^{(i)})_{y^{-}_i}$ over a point $y^{-}_i \in Y_-^{(i)}$.  
This defines a linear subspace of sections which vanish at $y^{-}_i$.  
The linear subspace picks out a subtorus of $C$.  This family of subtori is constant rank over $T_{Y^{(i)}}$.
So 
$Z^{(i)}$ is an irreducible torus bundle over $Y_-^{(i)}$.

Now we know that  the $\Omega$ sets are open and dense.  
So we can find an open dense subset of $C$  such that  $(Z^{(i)})_c \cap \Omega$ is dense when $c$ is in this set.  
With $c$ specialized, both Assumption 1 and Assumption 2 are satisfied, so 
$(Z^{(1)})_c$ and $(Z^{(2)})_c$ are birational.
\end{proof}

{\bf Batyrev-Nill's Example 5.1: $O_1 \neq \varnothing$ and the torsor ``direction.''} In the example of Batyrev and Nill, the specializations we use in these proofs are 
$$
A = 
\begin{pmatrix}
0 & 1\\
1 & 0
\end{pmatrix}
$$
and 
$$
(W) = 
\begin{pmatrix}
w_{11} & 0\\
0 & w_{22}
\end{pmatrix}+
\begin{pmatrix}
0 & 1\\
1 & 0
\end{pmatrix}.
$$
Setting $w_{11} = w_{22} = -1$ gives a point in $R^{(1)} \cap R^{(2)}$.  
Here $s=t$ gives the $\mathbf{G}_m^1$ fiber.  Recall
at other points, if the null row vector of $F^{(1)}$ is $(h_1, h_2)$, then $h_2s=h_1t$ gives the ``direction'' of the $\mathbf{G}_m^1$ fiber.

\section{Remark on multiple mirror Fanos.}
In a multiple mirror configuration for which the number $\beta$ of connected components of $D$ is greater than one,  we can compare the Fano complete intersections defined by choosing a non-empty proper subset  $B \subset \{ 1, \dotsc, \beta \}$
and considering the vanishing loci of the universal sections of 
$$
\bigoplus_{k \in \bigcup_{j \in B} D_{(j)} } \mathcal{L}^{(i)}_k.
$$
In this context the natural function to consider is the sum of block functions
$$
W_(B) = \sum_{j \in B} W_{(j)}.
$$
Our results are sufficient to show that for fixed $B$, the Fanos are birational.

 This  also constitutes a ``multiple mirror'' situation, since these Fanos arise as mirrors to the same Landau-Ginzburg model.
 On one hand, Kontsevich's homological mirror symmetry \cite{kontsevich-hms} combined with Bondal-Orlov's reconstruction theorem \cite{bondal-orlov-reconstruction} suggests that  they are more than just birational---they are isomorphic.  However, the details of homological mirror symmetry in this situation are not yet sufficiently developed to draw any definite conclusions.

\newpage
\footnotesize
\bibliography{BB-birat}
\bibliographystyle{halpha}  

\noindent
{Department of Mathematics, Drexel University, Philadelphia, PA 19104\\
\texttt{pclarke@math.drexel.edu}}

\end{document}